\theoremstyle{definition}
\newtheorem{definition}{Definition}[section]
\newtheorem{remark}[definition]{Remark}
\theoremstyle{plain}
\newtheorem{theorem}[definition]{Theorem}
\newtheorem{proposition}[definition]{Proposition}
\newtheorem{lemma}[definition]{Lemma}
\DeclareMathOperator{\aut}{aut}
\newcommand{\Pb}{{\mathbb P}}
\newcommand{\Eb}{{\mathbb E}}
\newcommand{\eps}{\varepsilon} 
\newcommand{\mc}{\mathcal}
\newcommand{\fgraph}{{H_F}}
\begin{document}
\title{Sharp Thresholds for Factors in Random Graphs}
\author{Fabian Burghart\thanks{Eindhoven University of Technology, Department of Mathematics and Computer Science, MetaForum MF 4.091, 5600 MB Eindhoven, the Netherlands. Email: \href{mailto:f.burghart@tue.nl}{\nolinkurl{f.burghart@tue.nl}}. This research has received funding from the European Union's Horizon 2020 research and innovation programme under the Marie Sk\l{}odowska-Curie Grant Agreement no. 101034253.}
    \and
    Annika Heckel\thanks{Matematiska institutionen, Uppsala universitet, Box 480, 751 06 Uppsala, Sweden. Email: \href{annika.heckel@math.uu.se}{\nolinkurl{annika.heckel@math.uu.se}}. Funded by the Swedish Research Council, Starting Grant 2022-02829.}. 
    \and
    Marc Kaufmann\thanks{Institut für Theoretische Informatik, ETH Zürich, Zürich, Switzerland. Email: \href{mailto:marc.kaufmann@inf.ethz.ch}{\nolinkurl{marc.kaufmann@inf.ethz.ch}}. The author gratefully acknowledges support by the Swiss National Science Foundation [grant number 200021\_192079].}
    \and
    Noela Müller\thanks{Eindhoven University of Technology, Department of Mathematics and Computer Science, MetaForum MF 4.084, 5600 MB Eindhoven, the Netherlands. Email: \href{mailto:n.s.muller@tue.nl}{\nolinkurl{n.s.muller@tue.nl}}. Research supported by the NWO Gravitation project NETWORKS under grant no. 024.002.003. }
    \and
    Matija Pasch\thanks{Email: \href{mailto:matija.pasch@gmail.com}{\nolinkurl{matija.pasch@gmail.com}}}
}
\maketitle

\begin{abstract}
 Let $F$ be a graph on $r$ vertices and let $G$ be a graph on $n$ vertices.
 Then an $F$-factor in $G$ is a subgraph of $G$ composed of $n/r$ vertex-disjoint copies of $F$, if $r$ divides $n$. In other words, an $F$-factor yields a partition of the $n$ vertices of $G$.
 The study of such $F$-factors in the Erdős–Rényi random graph dates back to Erdős himself.
 Decades later, in 2008, Johansson, Kahn and Vu established the thresholds for the existence of an $F$-factor for strictly 1-balanced~$F$ -- up to the leading constant.
 The sharp thresholds, meaning the leading constants, were obtained only recently by Riordan and Heckel, but only for complete graphs $F=K_r$ and for so-called \emph{nice} graphs.
 Their results rely on sophisticated couplings that utilize the recent, celebrated solution of Shamir's problem by Kahn.

 We extend the couplings by Riordan and Heckel to any strictly 1-balanced $F$ and thereby obtain the sharp threshold for the existence of an $F$-factor.
 In particular, we confirm the thirty year old conjecture by Rucínski that this sharp threshold indeed coincides with the sharp threshold for the disappearance of the last vertices which are not contained in a copy of $F$.\\
 
 \noindent \emph{2020 Mathematics Subject Classification.} 05C70 (primary); 05C80, 60C05 (secondary).
 
 \noindent \emph{Key words and phrases.} Sharp threshold; graph factor; 1-balanced graphs. 
\end{abstract}

\section{Introduction}\label{sec:introduction}
What is the simplest, local obstruction for a graph $G$, on an even number of vertices, to admit a perfect matching? Well, if an isolated vertex exists, a perfect matching cannot.
Back in 1966, Erdős and Rényi \cite{erdos1966existence} showed that this simplest, local obstruction is indeed the only relevant obstruction, when it comes to the \emph{random graph} $G(n,p)$, with vertices $[n]=\{1,\dots,n\}$, and where each edge is included independently with probability $p$.
To be precise, recall that a sequence $p^*=p^*_n$ is a \emph{sharp threshold} for a graph property $\mc E=\mc E_n$ if for all $\eps>0$ and $p\le(1-\eps)p^*$ the random graph $G(n,p)$ does not have $\mc E$ with high probability (whp), that is, with probability tending to one as $n$ increases, while for all $p\ge(1+\eps)p^*$ the random graph $G(n,p)$ does have $\mc E$ whp.
Erdős and Rényi \cite{erdos1966existence} observed that the sharp threshold for the existence of perfect matchings coincides with the sharp threshold for the disappearance of isolated vertices.

Now, fix a connected graph $F$ with $e(F)=s$ edges on $v(F)=r\ge 2$ vertices.
Further, let $G$ be a graph on $n$ vertices.
An \emph{$F$-factor} in $G$ is a collection of vertex-disjoint copies of $F$ in $G$ that cover all $n$ vertices. Thus, there are $n/r$ copies and hence $r$ necessarily divides $n$, which we assume in the following without further mention.
For example, in the base case where $F$ is an edge, an $F$-factor in $G$ is just a perfect matching, as discussed above (where we required $n$ to be even).

Can we also find a simple, local obstruction for the existence of a general $F$-factor?
Well, if there exists a vertex $v$ that is not contained in any copy of $F$ in $G$, then $G$ cannot have an $F$-factor. In analogy to the above, let the \emph{$F$-degree} of $v$ be the number of copies of $F$ in $G$ that contain $v$, and let $v$ be \emph{$F$-isolated} if its $F$-degree is zero.

The sharp threshold $p^*$ for the disappearance of $F$-isolated vertices is well-understood \cite{janson2000random,rucinski1992matching,spencer1990threshold}.
However, its discussion is rather cumbersome, so we content ourselves with building some intuition. The expected $F$-degree of a fixed vertex $v$ in $G(n,p)$ is
of order $n^{r-1}p^{s}=(np^{d_1(F)})^{r-1}$, where
\begin{align}\label{equ:1density_def}
 d_1(F)=\frac{e(F)}{v(F)-1}
\end{align}
is the \emph{1-density} of $F$.
In analogy to the discussion for graphs (and uniform hypergraphs, for that matter), we would expect the $F$-isolated vertices to disappear when the expected $F$-degree is of order $\ln(n)$
yielding that $p^*$ should be of order $\ln(n)^{1/s}/n^{1/d_1(F)}$.
However, what's missing in this line of reasoning, is that also for any subgraph $S\subset F$ the expected average degree cannot be too small. Say, if there exists $S\subset F$ with $d_1(S)>d_1(F)$, we quickly start to furrow our brows:
The expected $S$-degree is significantly smaller than the expected $F$-degree.
But whenever all $F$-isolated vertices disappear, also at least some linear number of $S$-isolated vertices disappears.
Indeed, in this case we would have to dive a lot deeper into the matter \cite[Chapter 3]{janson2000random}. But if $F$ is \emph{strictly 1-balanced}, that is, we have $d_1(S)<d_1(F)$ for any non-trivial proper subgraph $S\subsetneqq F$, then our intuition turns out to be correct.
Examples of such graphs are cycles and cliques, and more can be found in \cite{johansson2008factors,janson2000random}.
Let $\aut(F)$ be the number of automorphisms of $F$.
\begin{theorem}\label{thm:threshold_COV}
Let $F$ be a strictly 1-balanced graph with $s>0$ edges on $r\ge 2$ vertices.
Then the sharp threshold for the disappearance of $F$-isolated vertices in $G(n,p)$ is
$$p^*=\left(\frac{\aut(F)}{r!}\cdot\frac{\ln(n)}{\binom{n-1}{r-1}}\right)^{1/s}.$$
\end{theorem}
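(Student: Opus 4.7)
The plan is a standard first- and second-moment analysis based on Janson's inequality. Let $X_v$ be the indicator that $v$ is $F$-isolated in $G(n,p)$, and set $X=\sum_{v\in[n]}X_v$. The number of copies of $F$ in $K_n$ through a fixed vertex is $N:=\binom{n-1}{r-1}r!/\aut(F)$, so the expected $F$-degree is
\[
\mu \;:=\; N\,p^{s},
\]
and by the definition of $p^*$ one has $\mu = (1\pm\varepsilon)^{s}\ln n$ at $p=(1\pm\varepsilon)p^*$. The first step is to bracket $\Pb(X_v=1)$: Harris/FKG gives $\Pb(X_v=1)\ge(1-p^{s})^{N}=e^{-\mu(1+o(1))}$ (since $p^{s}=\Theta(\ln n/n^{r-1})\to 0$), while Janson's inequality applied to the events $\{C\subseteq G(n,p)\}$, as $C$ ranges over the copies of $F$ through $v$, gives $\Pb(X_v=1)\le\exp(-\mu+\Delta_v/2)$, where $\Delta_v$ sums $\Pb(C_1\cup C_2\subseteq G(n,p))$ over ordered pairs $C_1,C_2\ni v$ sharing at least one edge. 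I would group these pairs by the intersection $S:=C_1\cap C_2$ (a subgraph with $k\ge 2$ vertices and $\ell\ge 1$ edges); a standard double count yields a contribution of order $n^{2r-k-1}p^{2s-\ell}=\Theta\bigl(\mu^{2}(np^{d_1(S)})^{-(k-1)}\bigr)$. Strict 1-balancedness, $d_1(S)<d_1(F)$, together with $p=\Theta(n^{-1/d_1(F)}(\ln n)^{1/s})$, forces $np^{d_1(S)}$ to grow as a positive power of $n$, so $\Delta_v=n^{-\Omega(1)}\mu^{2}=o(1)$. I conclude that $\Pb(X_v=1)=e^{-\mu(1+o(1))}$ and hence $\Eb[X]=n^{1-(1\pm\varepsilon)^{s}+o(1)}$.

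The upper bound is then immediate: for $p\ge(1+\varepsilon)p^*$, $\Eb[X]\to 0$, and Markov together with the monotonicity of ``there exists an $F$-isolated vertex'' in $p$ yields $X=0$ whp. For the lower bound $p\le(1-\varepsilon)p^*$, $\Eb[X]\to\infty$, and I would proceed by Chebyshev. The required pairwise estimate comes from applying Janson jointly to the union of copies through $v$ or through $w$: the first-moment sum there equals $2\mu-\mu_{vw}$ with $\mu_{vw}=O(n^{r-2}p^{s})=o(1)$ uniformly, while the correlation sum splits as $\Delta_v+\Delta_w+\Delta_{v,w}$, where the cross-term $\Delta_{v,w}$ (pairs of copies, one through $v$ and one through $w$, sharing an edge) is controlled by the same strictly 1-balanced bookkeeping and in fact is of order $\Delta_v/n=o(1)$ uniformly in $(v,w)$. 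Thus $\Pb(X_v=X_w=1)\le(1+o(1))\Pb(X_v=1)\Pb(X_w=1)$ for every pair, whence $\var(X)/\Eb[X]^{2}\to 0$ and Chebyshev delivers $X\ge 1$ whp.

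The principal technical hurdle, shared by both directions, is the control of the Janson correlation sum; this is precisely where strict 1-balancedness is indispensable, since any subgraph $S\subsetneq F$ with $d_1(S)\ge d_1(F)$ would contribute at least $\Omega(\mu)$ to $\Delta_v$ and destroy the Poisson-type estimate for $\Pb(X_v=1)$, reflecting the subgraph-driven phenomenon alluded to in the introduction. Because each proper subgraph $S\subsetneq F$ instead yields a polynomial-in-$n$ saving over $\mu^{2}$ and there are only finitely many isomorphism types of $S$ to consider, the bookkeeping collapses, and the same template that handles $\Delta_v$ also handles $\Delta_{v,w}$.
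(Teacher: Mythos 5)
Your argument is correct. The paper itself does not prove Theorem~\ref{thm:threshold_COV}: it simply cites Theorem~3.22(i) of Janson, \L{}uczak and Ruci\'nski's book (originally Ruci\'nski, 1992), which covers exactly the strictly 1-balanced case, and stops there. What you have written out is, in substance, the proof that the citation points to: Harris/FKG for the lower bound $\Pb(X_v=1)\ge(1-p^s)^N=e^{-\mu+o(1)}$, Janson's inequality for the upper bound $\Pb(X_v=1)\le e^{-\mu+\Delta_v/2}$, the observation that every intersection $S=C_1\cap C_2$ of two distinct copies of $F$ through $v$ sharing an edge is a non-trivial proper subgraph of $F$ so that strict 1-balancedness $d_1(S)<d_1(F)$ forces $\Delta_v=\sum_S O\!\left(\mu^2(np^{d_1(S)})^{-(v(S)-1)}\right)=n^{-\Omega(1)}$, and then the second-moment (Chebyshev) step using the joint Janson bound with the cross-term $\Delta_{v,w}=O(\Delta_v/n)$ and $\mu_{vw}=O(n^{r-2}p^s)=o(1)$. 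The monotonicity remark correctly reduces the two regimes to $p=(1\pm\varepsilon)p^*$. So the proposal is not a ``different route'' so much as it supplies the details the paper delegates to the reference; the only thing I would tighten is to state explicitly that the uniform bound $\Pb(X_v=X_w=1)\le(1+o(1))\Pb(X_v=1)\Pb(X_w=1)$ together with $\Eb[X]\to\infty$ gives $\var(X)=o(\Eb[X]^2)$, which you gesture at but do not spell out.
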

\begin{proof}
This is an application of Theorem 3.22 (i) in \cite{janson2000random}, originally due to \cite{rucinski1992matching}.
\end{proof}
With the sharp threshold $p^*$ for the disappearance of $F$-isolated vertices in place, we turn back to the existence of $F$-factors.

Reportedly \cite{alon1993threshold}, Erdős was already concerned with the $F$-factor threshold for specific $F$.
Łuczak and Ruciński \cite{LuczakRucinski1987,LuczakRucinski1991} observed around 1990 that the sharp thresholds for the existence of $F$-factors and the disappearance of $F$-isolated vertices coincide whenever $F$ is a tree.
In 1992, Ruciński \cite{rucinski1992matching} bounded the $F$-factor threshold for a certain class of $F$. Further, he conjectured that the two thresholds coincide for all strictly 1-balanced $F$.
Subsequently, Alon and Yuster \cite{alon1993threshold} and Krivelevich \cite{Krivelevich1997TriangleFI} improved the bounds for $F$-factor thresholds for different classes of $F$.
Next to a very elaborate discussion of both thresholds, in 2000, Janson, Łuczak and Ruciński \cite{janson2000random} provided an example where the two thresholds do not coincide and stated the weaker conjecture that the two thresholds coincide for complete graphs.
After another improvement by Kim \cite{kim2003} in 2003,
Johansson, Kahn and Vu \cite{johansson2008factors} established in a celebrated contribution from 2008 that the two thresholds are of the same order for strictly 1-balanced $F$.
They also obtained an equally impressive result for more general $F$.
Next to a conjecture for the general case, they also implicitly conjectured that the sharp $F$-factor threshold coincides with $p^*$ for strictly 1-balanced~$F$.

After a partial confirmation of the general conjecture in \cite{johansson2008factors} by Gerke and McDowell \cite{gerke2015nonvertex} in 2014 for a certain class of nonvertex-balanced graphs, Riordan \cite{riordan2022random} and Heckel \cite{heckel2020random} established by 2022 that the two sharp thresholds coincide for complete graphs $F$, thereby resolving the weaker conjecture in \cite{janson2000random}.
\begin{theorem}\label{thm:threshold_Kr}
 Let $F$ be the complete graph on $r\ge2$ vertices.
 Then the sharp threshold for the existence of an $F$-factor is $p^*=p^*(F)$ from Theorem \ref{thm:threshold_COV}.
\end{theorem}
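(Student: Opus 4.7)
The lower bound for $p\le(1-\eps)p^*$ is immediate from Theorem~\ref{thm:threshold_COV}: whp there is a $K_r$-isolated vertex, which trivially precludes any $K_r$-factor. The content of the theorem lies entirely in the upper bound, namely that $G(n,p)$ admits a $K_r$-factor whp whenever $p\ge(1+\eps)p^*$.

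The plan is to reduce the statement to Kahn's recent solution of Shamir's problem, which asserts that the sharp threshold for a perfect matching in the binomial $r$-uniform random hypergraph $H^r(n,q)$ coincides with the threshold for the disappearance of isolated hypergraph vertices. Let $\mc H(G)$ denote the $r$-uniform hypergraph on $[n]$ whose hyperedges are the $r$-cliques of $G$; a $K_r$-factor in $G$ is then exactly a perfect matching in $\mc H(G)$, so the goal becomes the existence of such a matching in $\mc H(G(n,p))$.

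To this end I would use a two-round exposure, writing $G(n,p)=G(n,p_1)\cup G(n,p_2)$ with independent densities where $p_1$ already sits slightly above $p^*$ and $p_2$ is a small sprinkling layer. Conditioning on $G(n,p_1)$, one first shows by standard moment computations that whp the first-round graph enjoys strong quasi-randomness properties: sharp control on $K_r$-codegrees, a small number of $K_r$-isolated vertices, and no anomalously dense small subgraphs. Using these properties together with the sprinkled edges of $G(n,p_2)$, one then constructs a coupling that embeds a perfect matching of a suitably parameterized binomial hypergraph $H^r(n,q)$ into $\mc H(G(n,p))$, with $q$ tuned so that $H^r(n,q)$ lies above Kahn's perfect-matching threshold. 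Kahn's theorem supplies the matching in $H^r(n,q)$ whp, which then transfers to a $K_r$-factor in $G(n,p)$.

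The main obstacle, and the reason this theorem resisted resolution for decades, is the correlation structure of $\mc H(G(n,p))$: distinct $r$-cliques share edges, so the clique hypergraph is emphatically not binomial and does not admit straightforward stochastic domination by one. Engineering a sprinkling-based coupling that is compatible with these overlaps while producing the correct marginal hyperedge probability, and simultaneously absorbing the atypical vertices where the coupling locally fails, is the technical heart of the Riordan--Heckel approach. Once such a coupling is in place, Theorem~\ref{thm:threshold_COV} identifies $p^*$ as the correct location and Kahn's theorem delivers the $K_r$-factor.
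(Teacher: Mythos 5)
This theorem is not proved in the present paper; it is cited as a prior result of Riordan and Heckel, and the paper's own contribution is to generalize it to all strictly 1-balanced $F$ via the coupling in Theorem~\ref{thm:main_coupling}. Your reduction to Kahn's solution of Shamir's problem, and your observation that the difficulty lies in coupling the non-binomial clique hypergraph $\mc H(G(n,p))$ to a genuine binomial $H^r(n,q)$, is the right frame. However, the mechanism you propose -- two-round exposure $G(n,p)=G(n,p_1)\cup G(n,p_2)$, quasi-randomness of the first round, a sprinkling layer, and an absorption step for atypical vertices -- is not what Riordan and Heckel (or this paper) actually do, and you attribute it to them incorrectly. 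The actual route is a \emph{single-pass iterative coupling}: fix an ordering $F_1,\dots,F_M$ of all potential copies of $K_r$, reveal them one at a time, and at step $j$ compare the conditional inclusion probabilities $\pi_j$ (in $G$) and $\pi_j'$ (in the hypergraph) given the history. The heart of the argument is a Janson/Harris-type lower bound on $\pi_j$ that shows $\pi_j\ge(1+o(1))p^{e(F)}\ge\pi_j'$ except on bad events; those bad events are controlled by a structural classification of what can induce an unwanted clique (``avoidable configurations'' vs.\ ``clean $F$-cycles''), with the cycle obstruction handled by a Chen--Stein Poisson approximation that matches the clean cycles in $G$ and in $H$ before the main loop begins. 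There is no sprinkling and no absorption.

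This matters because the part of your sketch that is left vague is exactly the part that is hard, and the sprinkling/absorption framework does not obviously close the gap: sprinkling gives you extra independent edges, but the correlation obstruction is not that you lack edges -- it is that once certain cliques are present, other cliques are \emph{forced} to be present (``induced $F$-edges''), so no amount of added randomness lets $\mc H(G)$ dominate a binomial hypergraph edge-by-edge. The Riordan--Heckel insight is to identify precisely which local configurations create this forcing, prove they are rare or can be synchronized across the two models, and then run a conditional-probability coupling that succeeds outside those configurations. Your proposal names the right target (a coupling plus Kahn) but replaces the actual key lemma with a different and unsubstantiated strategy, so as written it has a genuine gap.
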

Riordan \cite{riordan2022random} further established the threshold for a more general class of hypergraphs. We only state the result for the graph case.
\begin{theorem}\label{thm:threshold_nice}
 Let $F$ be a strictly 1-balanced, 3-vertex-connected graph such that no graph obtained by removing one edge from $F$ and adding another is isomorphic to $F$.
 Then the sharp threshold for the existence of an $F$-factor in $G(n,p)$ is $p^*=p^*(F)$ from Theorem \ref{thm:threshold_COV}.
\end{theorem}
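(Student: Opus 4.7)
The lower bound is immediate from Theorem~\ref{thm:threshold_COV}: for $p\le(1-\eps)p^*$, whp $G(n,p)$ contains an $F$-isolated vertex, which already precludes any $F$-factor. I therefore concentrate on showing that for $p\ge(1+\eps)p^*$ the graph $G(n,p)$ whp admits an $F$-factor. To $G\sim G(n,p)$ I associate the $r$-uniform hypergraph $\fgraph=\fgraph(G)$ on $[n]$ whose hyperedges are exactly the $r$-subsets $S\subseteq[n]$ for which $G[S]$ contains a copy of $F$; an $F$-factor of $G$ is then literally a perfect matching of $\fgraph$.

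The plan is to compare $\fgraph$ with the binomial random $r$-uniform hypergraph $H^r(n,q)$, in which each $r$-set is independently a hyperedge with probability $q$. Choosing $q=(1+\eps/2)\ln(n)/\binom{n-1}{r-1}$, Kahn's resolution of Shamir's problem produces a perfect matching of $H^r(n,q)$ whp; at the same time, a routine first-moment computation using $\Pb(G[S]\supseteq F)=(1+o(1))\frac{r!}{\aut(F)}p^{s}$ and $p\ge(1+\eps)p^*$ shows that each $r$-set lies in $\fgraph$ with probability comfortably above $q$. The aim is therefore to construct a coupling under which $H^r(n,q)\subseteq\fgraph(G(n,p))$ whp.

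Following the Riordan--Heckel scheme, I would split the exposure $G(n,p)=G_1\cup G_2$ into a main round $G_1=G(n,p_1)$ and a small sprinkling round $G_2=G(n,p_2)$. In the main round one designs, for each $r$-set $S$, an indicator of containing a copy of $F$ that depends only on a carefully chosen local subset of the edges of $G_1$ inside $S$, in such a way that the indicators are \emph{nearly} mutually independent across $S$. Their Bernoulli marginals are verified by a switching argument pairing $r$-set configurations that contain a copy of $F$ with those that do not, differing by only $\bigO(1)$ edges; the sprinkling round $G_2$ then patches the few $r$-sets on which the first-round indicator disagrees with true membership in $\fgraph$ and drives the residual error to $o(1)$.

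The two structural assumptions on $F$ enter precisely at this switching step: the 3-vertex-connectivity of $F$ controls the way two copies of $F$ can share a small vertex cut and thereby bounds the correlations between indicators on overlapping $r$-sets, while the no-edge-swap condition rules out a single-edge swap producing another copy of $F$ on the same vertex set, ensuring that the switching map is well-defined and the marginals come out sharp. The principal obstacle is entirely quantitative: the combined error from approximate independence, from the switching comparison, and from the sprinkling round must together be smaller than the $\eps p^*$ gap to the threshold, so that the coupled $q$ still clears Shamir's threshold strictly. Granted a coupling this tight, Kahn's theorem yields a perfect matching of $H^r(n,q)$, hence of $\fgraph$, giving the desired $F$-factor in $G(n,p)$.
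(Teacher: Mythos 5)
Your high-level strategy is the right one: encode $F$-factors as perfect matchings of the hypergraph $\fgraph(G)$ whose hyperedges are the $r$-sets spanning a copy of $F$, construct a coupling in which the binomial hypergraph $H^r(n,q)$ with $q$ just above $\ln n/\binom{n-1}{r-1}$ embeds into $\fgraph(G(n,p))$, and invoke Kahn's resolution of Shamir's problem. This matches the framework both of Riordan's original proof of this theorem and of the present paper's generalization (Theorem~\ref{thm:main_threshold}). The marginal computation $\Pb(G[S]\supseteq F)=(1+o(1))\frac{r!}{\aut(F)}p^{s}$ is also correct.

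However, the mechanism you describe for building the coupling does not address the actual obstruction, and I do not think it can be made to work as written. The difficulty is not in verifying the Bernoulli marginals (a direct computation suffices; no ``switching argument'' is needed), nor in making the indicators ``nearly independent'' within each $r$-set, nor in a two-round exposure with a sprinkling patch-up. The genuine obstruction is what Riordan calls an \emph{induced $F$-edge}: a collection of copies $h_1,\dots,h_k$ of $F$ that are placed in the hypergraph can, through their union as graphs, contain every edge of yet another copy $h$ of $F$ that was \emph{not} placed. In that event $G$ is forced to contain $h$, but the hypergraph was free not to contain $h$, so the naive coupling breaks. The entire technical content of the proof is controlling these induced edges: one shows (Lemma~\ref{lem:induced}, from \cite{riordan2022random}) that only ``avoidable configurations'' and ``clean $F$-cycles'' can induce an $F$-edge, that avoidable configurations occur with probability $o(1)$ (Lemma~\ref{lem:avoidable_configs}), and --- crucially --- that for $F$ satisfying the hypotheses of this theorem (3-vertex-connected, no edge-swap symmetry) a clean $F$-cycle cannot induce an $F$-edge at all. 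That last fact is where both hypotheses enter, and it is a deterministic combinatorial statement about $F$, not a device for sharpening marginals. Once induced edges are ruled out whp, the coupling is constructed sequentially over potential $F$-edges (as in Section~\ref{sec:coupling_alg}), with the conditional inclusion probabilities bounded via a Janson/Harris-type argument (Section~\ref{sec:cond_prob}); there is no separate sprinkling round. Your proposal omits the induced-edge phenomenon entirely and attributes to the structural hypotheses a role (marginal sharpness via switching) that they do not play, so as it stands the core of the argument is missing.
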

Heckel and Riordan obtained Theorem \ref{thm:threshold_Kr} and Theorem \ref{thm:threshold_nice} by constructing a coupling of $G(n,p)$ and the \emph{random $r$-uniform hypergraph} $H_r(n,\pi)$, with vertices $[n]$, and where each hyperedge is included independently with probability $\pi$. Then they utilized the following celebrated solution to Shamir's problem by Kahn \cite{kahn2023asymptotics,kahn2022hitting} to derive the existence of an $F$-factor in $G(n,p)$ from the existence of a perfect matching in $H_r(n,\pi)$.
\begin{theorem}\label{thm:shamirs_problem}
 For any $r\ge 2$, the sharp threshold for the existence of a perfect matching in $H_r(n,\pi)$ is
 \[
  \pi^*=\frac{\ln(n)}{\binom{n-1}{r-1}}.
 \]
\end{theorem}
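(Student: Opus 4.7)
The plan is to separate the problem into matching upper and lower bounds for $\pi^*$. The \emph{lower bound} (non-existence of a perfect matching below $\pi^*$) follows from the standard isolated-vertex obstruction: a single isolated vertex precludes a perfect matching, and the probability that a given vertex is isolated is $(1-\pi)^{\binom{n-1}{r-1}} \approx \exp(-\pi \binom{n-1}{r-1})$. At $\pi = (1-\eps)\pi^*$ the expected number of isolated vertices grows like $n^\eps$, and a second-moment calculation (the covariance of two isolated-vertex indicators is easily controlled) yields their existence whp.

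The \emph{upper bound} is the substantive part and constitutes Kahn's celebrated contribution. A modern high-level strategy proceeds in two phases. First, a Rödl-nibble-style semi-random process on $H_r(n,\pi)$ constructs a matching covering all but at most $n^{1-\delta}$ vertices; this step is classical once $\pi$ is above the expectation threshold $\Theta(1/\binom{n-1}{r-1})$. Second, one extends the near-perfect matching to a full perfect matching by swapping in reserved ``absorber'' structures capable of covering arbitrary small leftover vertex sets. The absorbers must already be present in the random hypergraph, and this is where the precise constant $\ln(n)$ enters: exactly enough random edges to guarantee absorbing capacity at every vertex.

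The main obstacle is aligning the absorber availability with the disappearance of isolated vertices, so as to hit the \emph{sharp} constant rather than just the order of magnitude (the latter being delivered, for instance, by the Frankston--Kahn--Narayanan--Park theorem applied to the uniform measure on perfect matchings of $K_r^{(n)}$, which can be shown to be $q$-spread for $q = O(1/\binom{n-1}{r-1})$). In Kahn's hitting-time form, the claim is that the instant the last non-isolated vertex appears, enough absorbing structure is already in place to close the matching. This is proved via an intricate coupling between the matching-building process and a simpler covering process on the vertex set, controlled by entropy estimates; the delicate accounting of log-factors --- shaving off the excess polylogarithmic losses endemic to nibble arguments --- is the technical heart of the proof, and the step I would expect to be by far the hardest to execute in detail.
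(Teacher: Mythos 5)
The paper does not prove this theorem; it is invoked as a black box with a citation to Kahn's solution of Shamir's problem, so there is no in-paper argument to compare against. Your lower-bound part (isolated vertices obstruct a perfect matching, and a first/second-moment calculation shows they persist whp when $\pi = (1-\eps)\pi^*$) is correct and standard.

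For the upper bound, what you have written is an honest roadmap rather than a proof, and it should be treated as such. Two cautions are worth recording. First, the generic ``R\"odl-nibble plus absorbers'' template you sketch is a reasonable intuition but does not faithfully describe Kahn's actual argument: the nibble does not directly apply to $H_r(n,\pi)$ at density $\Theta(\ln n/\binom{n-1}{r-1})$ --- it is a tool for quasirandom host hypergraphs, not sparse random ones --- and Kahn's route to the sharp constant is a considerably more delicate iterative/entropy argument, not a reservoir-of-absorbers swap in the sense familiar from dense extremal settings. Second, you are right that fractional-expectation-threshold / spread machinery (Frankston--Kahn--Narayanan--Park, or Park--Pham) only recovers the order of magnitude $\Theta(1/\binom{n-1}{r-1})$ up to a multiplicative $O(\ln n)$ factor; reaching the sharp constant $\ln n$ genuinely requires Kahn's separate argument. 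None of this affects the present paper, which only uses the theorem as an input, but if you intended the sketch to be expandable into a proof, the ``absorber'' step as described would not go through without being replaced by the substance of Kahn's actual proof.
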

In one of the two contributions,  Kahn \cite{kahn2023asymptotics} also conjectures that the sharp $F$-factor threshold is $p^*=p^*(F)$ from Theorem \ref{thm:threshold_COV} for all strictly 1-balanced $F$ and slightly beyond.
Finally, in a groundbreaking contribution \cite{park2024proof} by Park and Pham from 2024, the expectation-threshold conjecture by Kahn and Kalai \cite{kahn2007thresholds} was confirmed. While not relevant to our case at hand, this result has significant implications for the general case and the discussion in \cite{johansson2008factors} in particular.

\medskip

\noindent\textbf{Hitting Times.}
While the results above are impressive to say the least, they still fail to capture just how deep the connection of $F$-factors and $F$-isolated vertices actually is.
This deep connection is captured by the \emph{hitting-time versions} of some of the results above \cite{bollobas1985,Ajtai1987FirstOccurrence,LuczakRucinski1991, kahn2023asymptotics,kahn2022hitting,heckel2024hitting,burghart2024}.

Restricted to the graph case, these refinements state that the \emph{hitting times} for the disappearance of the last $F$-isolated vertex and the existence of an $F$-factor coincide whp in the standard random graph process, with vertices $[n]$, where we start with the empty graph $G_0$ and include a new edge uniformly at random in each time step $t$ to obtain the graph $G_t$. So, if we let $T$ denote the step where the last $F$-isolated vertex disappears, then we already find an $F$-factor in $G_T$ whp.
These results confirm in a very strong sense that the simple, local obstruction given by $F$-isolated vertices is indeed typically the only reason for the absence of $F$-factors.

\subsection{Main Result}\label{sec:main_result}
The purpose of this article is to extend Theorem \ref{thm:threshold_Kr} and Theorem \ref{thm:threshold_nice} (in the graph case), unify them and to confirm the strong conjecture in \cite{rucinski1992matching}, thereby partially confirming the conjectures from \cite{johansson2008factors,kahn2023asymptotics}.
Recall $d_1(F)$ from Equation \eqref{equ:1density_def} and that a graph $F$ on $r\ge 2$ vertices is strictly 1-balanced if $d_1(S)<d_1(F)$ for all non-trivial proper subgraphs $S\subsetneqq F$. Also, recall the sharp threshold $p^*=p^*(F)$ for the disappearance of the last $F$-isolated vertex in the random graph $G(n,p)$ from Theorem \ref{thm:threshold_COV}.
\begin{theorem}\label{thm:main_threshold}
Let $F$ be a strictly 1-balanced graph with $s>0$ edges on $r\ge 2$ vertices.
Then the sharp threshold for the existence of an $F$-factor in the random graph $G(n,p)$ is $p^*$.
\end{theorem}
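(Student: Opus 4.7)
The plan is to reduce Theorem \ref{thm:main_threshold} to Kahn's solution of Shamir's problem via a carefully engineered coupling between $G(n,p)$ and $H_r(n,\pi)$, extending the approach of Heckel \cite{heckel2020random} and Riordan \cite{riordan2022random} from the complete and ``nice'' cases to arbitrary strictly 1-balanced $F$. The lower bound is immediate: for $p\le(1-\eps)p^*$, Theorem \ref{thm:threshold_COV} produces an $F$-isolated vertex whp, which forbids an $F$-factor. All the content lies in the upper bound.

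For $p\ge(1+\eps)p^*$, the goal is to exhibit, on a single probability space, a random graph distributed as $G(n,p)$ together with a random $r$-uniform hypergraph $H$ on $[n]$ satisfying two properties: first, $H$ stochastically dominates $H_r(n,\pi)$ for some $\pi\ge(1+\eps')\pi^*$ with $\pi^*$ as in Theorem \ref{thm:shamirs_problem}; and second, every hyperedge of $H$ is the vertex set of some copy of $F$ contained in $G$. Granting such a coupling, Theorem \ref{thm:shamirs_problem} yields a perfect matching in $H$ whp, which by construction projects onto an $F$-factor in $G$. The relation between $p$ and $\pi$ is dictated by the elementary observation that a fixed $r$-subset supports a copy of $F$ in $G(n,p)$ with probability approximately $(r!/\aut(F))\,p^s$, and equating this quantity at $p=p^*$ to $\pi^*$ is precisely the content of Theorem \ref{thm:threshold_COV}.

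The coupling itself is built by a two-round sprinkling: split $p$ via $1-p=(1-p_1)(1-p_2)$ with $p_1$ chosen so that $(r!/\aut(F))\,p_1^s$ is just above $\pi^*$, and with $p_2$ a small reservoir of $o(p)$ extra edges. In the first round, for every $r$-subset $A\subset[n]$ one inspects the at most $r!/\aut(F)$ potential placements of $F$ on $A$ realized by edges in $G(n,p_1)$, and through a canonical \emph{selection or marking procedure} designates at most one approved copy per $A$. The strict 1-balancedness of $F$ ensures $d_1(S)<d_1(F)$ for every non-trivial $S\subsetneqq F$, so the expected number of pairs of $F$-copies overlapping in a fixed $S$ is of strictly smaller order than the expected number of $F$-copies themselves; this is the key second-moment ingredient ensuring that the marked $r$-subsets behave nearly independently and, in particular, that the resulting hypergraph stochastically dominates $H_r(n,\pi)$ up to a small loss that is absorbed by the sprinkled round $G(n,p_2)$.

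The main obstacle, in my view, is executing this coupling in the general strictly 1-balanced regime, where the additional structural assumptions of Theorem \ref{thm:threshold_nice}, namely 3-vertex-connectivity and the no-edge-swap rigidity condition, are no longer available to streamline the overlap analysis. In particular, one must handle: first, \emph{multiplicities}, that is, several distinct placements of $F$ on the same $r$-subset, which requires a careful canonical selection or inclusion-exclusion argument to avoid double counting and to retain tractable marginal probabilities; and second, \emph{subtle correlations} when two $r$-subsets share $2,\dots,r-1$ vertices through small cuts or cut-vertices of $F$ that the earlier work sidestepped. Controlling these correlations should reduce to refined subgraph-counting estimates that invoke $d_1(S)<d_1(F)$ for every relevant $S$, combined with the flexibility provided by the $G(n,p_2)$ sprinkling to locally complete any almost-copies missed in the first round.
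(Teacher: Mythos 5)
Your high-level plan — reduce to Theorem \ref{thm:shamirs_problem} via a coupling of $G(n,p)$ to an $r$-uniform hypergraph, with the lower bound supplied by Theorem \ref{thm:threshold_COV} — matches the paper's strategy. But the coupling mechanism you propose (two-round sprinkling plus a one-shot marking of $r$-subsets, then arguing ``near independence'' by second-moment estimates from strict 1-balancedness) is genuinely different from what the paper does, and it has gaps that I do not think are repairable in the form you describe.

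First, the claim that the marked hypergraph \emph{stochastically dominates} $H_r(n,\pi)$ cannot follow from marginal bounds together with overlaps being of lower order. The indicators ``$A$ is marked'' are positively correlated through shared graph edges, and a family of positively correlated Bernoullis with marginals $\ge\pi$ need not dominate independent Bernoullis at level $\pi$ (already two perfectly correlated coins fail). What the paper actually produces is a whp coupling, built \emph{sequentially}: one fixes an ordering of the potential $F$-edges and, at each step $j$, compares the conditional inclusion probability $\pi'_j$ in $\fgraph(n,\pi)$ with $\pi_j$ in the auxiliary model, showing $\pi'_j\le\pi_j$ outside a small bad event (Sections \ref{sec:coupling_alg}--\ref{sec:prop_coupling_works_proof}). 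A batch marking procedure gives you no handle on these conditional probabilities and hence no coupling in the sense needed.

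Second, and more fundamentally, you never confront the central obstruction that any such coupling must face: \emph{induced $F$-edges}. If several copies of $F$ placed in $G(n,p)$ happen to cover all edges of a further copy $h$ of $F$, then $h$ is automatically present in the $F$-graph of $G$ even though $H_F(n,\pi)$ may have chosen to exclude it; this breaks any naive product structure. Riordan's Lemma (Lemma \ref{lem:induced}) shows that minimal inducing configurations are either avoidable configurations (which are rare and can be conditioned away) or clean $F$-cycles, and the latter are \emph{not} rare — whp there are polynomially many of them. Your second-moment remark, that pairs of overlapping $F$-copies are of lower order than $F$-copies, addresses avoidable configurations but not clean $F$-cycles, whose counts must be explicitly coupled between the two models via the Chen--Stein method (Proposition \ref{prop:cFc_coupling}).

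Third, you miss the probability mismatch for \emph{sparse} clean $F$-cycles: two $F$-copies sharing exactly one edge yield a shadow graph with one fewer edge than $2e(F)$, so the expected number of such shadows in $G(n,p)$ exceeds the expected number of sparse clean $F$-cycles in $\fgraph(n,\pi)$ by a factor $1/p$. No amount of sprinkling ``extra'' edges from $G(n,p_2)$ corrects this, because it is not a deficiency of $F$-copies but a mismatch of \emph{counts of a specific local pattern}. The paper's fix is the dummy-edge construction (Definition \ref{def:dgraph}) and the auxiliary random d-graph $G^*(n,p)$, which restores the correct exponent and makes the shadow graphs of all clean $F$-cycles strictly balanced (Lemma \ref{lem:cc_strictly_balanced}) — a property used throughout the conditional-probability bounds. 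Without something playing this role, the approach stalls exactly at the sparse-cycle case, which is precisely where the paper departs from both Riordan and Heckel.
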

As in \cite{heckel2020random,riordan2022random},
we construct a coupling and then apply Theorem \ref{thm:shamirs_problem} to obtain Theorem \ref{thm:main_threshold} as a corollary.
However, the coupling has several immediate implications, which is why we state it as a second main result. For this purpose, we recall the following definitions from~\cite{riordan2022random}.

An \emph{$F$-graph} $H=(V,E)$ is given by the vertices $V$ and \emph{$F$-edges} $e\in E$, where each $F$-edge $e$ is a copy of $F$ with vertices in $V$.
The \emph{random $F$-graph} $\fgraph(n,\pi)$ has the vertex set $[n]$ and each of the $\binom{n}{r} \frac{r!}{\mathrm{aut}(F)}$ potential copies of $F$ with vertices in $[n]$ is included independently with probability $\pi$.
\begin{theorem}\label{thm:main_coupling}
Let $F$ be a strictly 1-balanced graph with $s>0$ edges on $r\ge 2$ vertices.
Then there exist $\delta,\eps>0$ such that the following holds.
For any sequences $p\le n^{-1/d_1(F)+\eps}$ and $\pi\le(1-n^{-\delta})p^{e(F)}$ there exists a coupling of $G=G(n,p)$ and $H=\fgraph(n,\pi)$ such that whp, for each $F$-edge in $H$ the corresponding copy of $F$ is present in $G$.
\end{theorem}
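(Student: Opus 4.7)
The plan is to extend the coupling constructions of Riordan \cite{riordan2022random} and Heckel \cite{heckel2020random}, which handled complete graphs and ``nice'' graphs, to all strictly 1-balanced $F$. Throughout, we use a two-round exposure: write $1-p = (1-p_1)(1-p_2)$ with $p_1$ slightly smaller than $p$ and $p_2$ of order $n^{-\delta'} p$ for some small $\delta' > 0$, so that $G = G_1 \cup G_2$ with independent $G_i \sim G(n,p_i)$. The first layer $G_1$ supplies the edges of the $F$-copies appearing in $H$, while $G_2$ acts as an independent sprinkling that absorbs small discrepancies introduced by forcing certain edges into $G_1$.

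Next, attach an independent uniform $[0,1]$ mark $M_c$ to every potential copy $c$ of $F$ on $[n]$ and set $H = \{c : M_c \le \pi\}$, so that $H \sim \fgraph(n,\pi)$ by construction. The goal is to build a joint distribution of $(G_1, G_2, H)$ such that the marginal of $G = G_1 \cup G_2$ is $G(n,p)$ and every $F$-edge of $H$ corresponds to a copy of $F$ already present in $G$. The natural strategy is to first declare the \emph{certificate edges} (the union of edges over all copies in $H$) to belong to $G_1$, and then to fill in the remaining potential edges of $G_1$ with appropriately tuned conditional probabilities so that the overall marginal of $G_1$ is still $G(n, p_1)$, any leftover deficit being absorbed by $G_2$.

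The crux is to show that this construction succeeds with high probability. Since $\pi \le (1-n^{-\delta}) p^{e(F)}$, the expected number of copies in $H$ is strictly less than the expected number of copies of $F$ in $G(n,p)$, providing the necessary headroom. The certificate edges must behave ``locally random'': for any fixed edge $e$, the conditional probability that $e$ is a certificate edge, averaged over $M$, should be $o(p)$, and the covariance between certificate indicators of different edges should be small. Strict 1-balancedness is the key combinatorial input: pairs of copies of $F$ on $[n]$ sharing a nontrivial subgraph $S \subsetneq F$ contribute negligibly to the total count, since $d_1(S) < d_1(F)$ ensures the expected number of such overlapping pairs is of strictly smaller order than the square of the expected $|H|$.

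The main obstacle is this correlation-control step. In the setting of \cite{riordan2022random}, 3-vertex-connectivity and non-replaceability eliminate many pathological overlap configurations by structural means; without them, one must systematically enumerate all nontrivial subgraphs $S \subsetneq F$ that can be shared between two copies of $F$, bounding each contribution using only $d_1(S) < d_1(F)$ together with the polynomial slack $n^{-\delta}$ afforded by the hypothesis on $\pi$. This case analysis is technical but is driven by a single combinatorial inequality; once it is complete, a second-moment-type estimate shows that the forced certificate edges can be embedded into $G_1$ while preserving the marginal distribution of $G$, and the desired coupling follows.
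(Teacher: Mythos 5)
Your proposal correctly identifies the general shape of the task (commit the certificate edges of $H$ to part of $G$, then fill in the remainder to preserve the marginal), but it stops exactly where the real difficulty begins and misdiagnoses what that difficulty is.

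The step you gloss over as ``fill in the remaining potential edges of $G_1$ with appropriately tuned conditional probabilities'' is the entire content of the proof. The paper does this via an iterative coupling (following Riordan and Heckel): process potential $F$-copies one at a time, and at step $j$ compare the conditional probability $\pi'_j$ that $F_j\in H$ with the conditional probability $\pi_j$ that all edges of $F_j$ lie in $G$. The coupling works precisely when $\pi_j\ge\pi'_j$ at every step, and establishing this inequality is where the structural combinatorics enters. Your proposal does not show why such a ``tuning'' exists; a second-moment estimate on $|H|$ or on overlapping pairs gives concentration of counts but not a domination of conditional probabilities.

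More importantly, the obstruction you name is the wrong one. You argue that pairs of $F$-copies overlapping in a subgraph $S\subsetneq F$ are negligible because $d_1(S)<d_1(F)$. This is not true in the relevant regime: at the threshold, $\pi\asymp n^{-(r-1)}\ln n$, so the expected number of pairs of $F$-copies in $H$ sharing a single edge is $\Theta\bigl(n^{2(r-1)}\pi^2\bigr)=\Theta\bigl((\ln n)^2\bigr)$, which is not $o(1)$, and the expected number of their \emph{shadow graphs} in $G$ is even of polynomial order $\Theta\bigl(n^{1/d_1(F)}\,\mathrm{polylog}\bigr)$. The real obstructions are (i) \emph{induced $F$-edges}: several copies in $H$ can jointly force the edge set of a copy that is not in $H$, so the $F$-graph of $G$ can never equal $H$; Riordan's analysis shows the minimal inducing configurations are avoidable configurations (nullity $\ge2$, whp absent) or \emph{clean $F$-cycles} (nullity $1$, present in abundance). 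And (ii) among clean $F$-cycles, the \emph{sparse} ones --- two copies sharing exactly one edge --- have a shadow graph with $2e(F)-1$ edges rather than $2e(F)$, so the clean-cycle count in $H$ and the shadow-graph count in $G$ differ by a factor of $p$. Heckel's method of matching clean cycles in $H$ against their shadow graphs in $G$ via a Chen--Stein / Poisson-approximation argument then breaks down, because the two expected counts no longer coincide. The paper's fix is to augment $G$ to a random d-graph $G^*$ with an extra independent ``dummy edge'' of probability $p$ for each potential sparse clean $F$-cycle, restoring the count match and also making the shadow of every clean d-cycle strictly balanced (Lemma~\ref{lem:cc_strictly_balanced}), which is the key input both to the Chen--Stein step (Proposition~\ref{prop:cFc_coupling}) and to the later bound on the good-cycle contribution $Q_{\mathrm{cg}}$. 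Your two-round sprinkling layer $G_2$ does not play this role and cannot: it absorbs a uniform multiplicative slack, but the sparse-clean-cycle mismatch is a structural, configuration-specific discrepancy. Without identifying induced edges, clean cycles, and the sparse-cycle problem, and without a mechanism playing the role of the dummy edges, the proposal does not establish the coupling.
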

This coupling allows to transfer results for the random $r$-uniform hypergraph $H_r(n,\pi)$ to the random graph $G(n,p)$, not only the existence of perfect matchings \cite{heckel2020random,riordan2022random}, but also $k$-connectivity \cite{Poole2015StrengthConnectedness} or the existence of loose Hamilton cycles \cite{frieze2010loose,dudek2010loose,dudek2012optimal,frieze2020,diaz2023spanning}.
The required coupling of $H_r(n,\pi')$ and $\fgraph(n,\pi)$ is given by including a hyperedge $h$ in $H_r(n,\pi')$ if there exists an $F$-edge in $\fgraph(n,\pi)$ with vertex set $h$, which yields the inclusion probability
\begin{equation}\label{eq:HFHr}
 \pi'=1-(1-\pi)^{r!/\aut(F)}.
\end{equation}

\begin{remark}
 It follows from the proof that Theorems~\ref{thm:main_threshold} and \ref{thm:main_coupling} remain valid if $F$ is a strictly 1-balanced $u$-uniform hypergraph. In this case, $G(n,p)$ in these theorems needs to be replaced by $H_u(n,p)$, the random $u$-uniform hypergraph. In fact, the proof for $u>2$ can be simplified, since two copies of $F$ overlapping in exactly two vertices can never intersect in an $u$-edge. Thus, the obstacle posed by sparse clean cycles as defined in Section~\ref{sec:fps:dgraphs} does not occur if $u>3$, and we can couple $H_u(n,p)$ directly to $H_F(n,\pi)$ without requiring a hypergraph version of the d-graph $G^*$ from Definition~\ref{def:dgraph}.  
\end{remark}

\section{Proof Strategy}\label{sec:proof_strategy}
The proof of Theorem \ref{thm:main_coupling}
follows the line of reasoning in \cite{heckel2020random}. However, due to the extension of the very specific case of the triangle $K_3$ to the very general class of strictly 1-balanced graphs, we introduce a few major modifications and replace several key arguments. In the following high-level presentation of our proof, we will explicitly point out these novelties.

Both the proof in \cite{heckel2020random} and our discussion are modifications of the beautiful proof in \cite{riordan2022random}.
In particular, we borrow concepts as well as results from \cite{riordan2022random} that directly apply to general strictly 1-balanced graphs.
Thus, we will also compare our discussion to \cite{riordan2022random}.
In order to convey the relevant ideas underlying the proof, we will use intuitive descriptions of the central concepts and postpone the technical formal definitions to the next section.

Finally, recall from Section \ref{sec:main_result} that Theorem \ref{thm:main_threshold} follows from Theorem \ref{thm:main_coupling} and Theorem~\ref{thm:shamirs_problem}.

\subsection{Induced Edges, Avoidable Configurations and Clean Cycles}\label{sec:ps:induced}
When trying to construct a coupling between the random $F$-graph $H=\fgraph(n,\pi)$ and the copies of $F$ in the random graph $G=G(n,p)$, one invariably encounters the following fundamental problem.
Let $H'$ be the $F$-graph of $G$, meaning that we include each copy of $F$ in $G$ as an $F$-edge in $H'$.
For all $0<\pi<1$, $H$ can clearly attain any $F$-graph.
But this is not the case for $H'$, due to the existence of so-called 'induced $F$-edges': 
Assume that there is an $F$-edge $h$, meaning a copy $h$ of $F$, which is not contained in $H$, but that for each (graph) edge $e$ in the copy $h$ of $F$ there exists an $F$-edge $h_e$ in $H$ that covers $e$.
Then $H'$ cannot attain $H$, because if the copies $h_e$ of $F$ appear in $G$, then so does the copy $h$ of $F$.

Riordan shows that only two types of (minimal) $F$-graphs can induce $F$-edges: so-called `avoidable configurations' and so-called `clean $F$-cycles'.
Roughly speaking, the former are given by connected $F$-graphs which have at least two cycles and the latter
consist of exactly one cycle, with `cycles' defined such that two hyperedges that overlap on more than two vertices have more than one cycle.
He proceeds to show that avoidable configurations do not exist whp in $H$.

\subsection{Sparse Clean Cycles and Dummy Edges}\label{sec:ps:sparse_cc}
This leaves us with clean $F$-cycles, which may exist in $H$.
Riordan avoids the discussion of these clean $F$-cycles by restricting to the class of graphs $F$ from Theorem \ref{thm:threshold_nice}, which he called `nice': A graph with these properties cannot be induced by a clean $F$-cycle.
Heckel \cite{heckel2020random}, on the other hand, tweaks Riordan's coupling to deal with the only  type of clean $K_3$-cycle which can induce a triangle $K_3$, namely the one of length $3$. 
Since we consider the case of general strictly 1-balanced graphs, we also have to deal with clean $F$-cycles. But as opposed to Heckel, we have to consider all types of clean $F$-cycles. While this is a major complication in its own right, there is one particular type of clean $F$-cycle which is particularly demanding: so-called `sparse clean $F$-cycles', which are given by two $F$-edges, meaning two copies of $F$, that share exactly one (graph) edge (and no other vertices), as in Figure~\ref{fig:sparseclean}.

\begin{figure}
 \centering
 \includegraphics[scale=1]{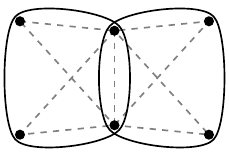}
 \caption{A sparse clean $F$-cycle, where $F$ is the graph obtained from $K_4$ by deleting an edge.}
 \label{fig:sparseclean}
\end{figure}

Recall that we have $\pi\approx p^{e(F)}$.
So a clean $F$-cycle $C$ of length $k$ and its shadow graph $C'$, which is obtained by taking the graph union of the $F$-edges in $C$, have the same inclusion probability $\pi^k\approx p^{ke(F)}$, in $H$ and $G$ respectively --- unless $C$ is a sparse clean $F$-cycle, because then we are missing an edge in $C'$. This small specialty of $C$ has significant implications for the entire proof.
The treatment of these sparse clean $F$-cycles is one of our two main contributions.

Heckel heavily relies on the fact that the expected number of clean $3$-cycles in $H$ (the only relevant type of clean $K_3$-cycle in the setting of \cite{heckel2020random}) matches the expected number of their shadow graphs in $G$ almost exactly. 
This allowed her to couple the clean $3$-cycles in $H$ and their shadow graphs in $G$ so that they match exactly whp. 
For sparse clean $F$-cycles, these expected numbers differ by a factor of $p$, thus this approach is doomed to fail. We do the next best thing: We randomly select a subset of the shadow graphs of sparse clean $F$-cycles in $G$, where we include each such shadow graph independently with probability $p$. This ensures that the number of \emph{selected} shadow graphs matches the number of sparse clean $F$-cycles in $H$.

For our proof, it is extremely useful to model the inclusion in this random subset as follows.
For each potential copy of a sparse clean $F$-cycle, we introduce an additional `dummy edge'.
We further consider the random graph $G^*=G^*(n,p)$, where we do not only include each potential edge, but also each potential dummy edge independently with probability $p$. 
We include a shadow graph $S$ of a sparse $F$-cycle in the random subset in $G$ if $S$ \emph{and} the corresponding dummy edge is contained in $G^*$.  
In this sense, the selected shadow graph now has the right number of edges in $G^*$.
Note that for some graphs $F$, two different sparse $F$-cycles can have the same shadow graph (for example, if $F$ consists of two triangles sharing exactly one edge). In this case we consider separate dummy edges for each $F$-cycle, and a shadow graph $S$ has several `chances' to be added to the random set --- independently with probability $p$ for each potential sparse $F$-cycle that has $S$ as its shadow graph.

\subsection{Coupling Clean Cycles}\label{sec:ps:cc_coupling}
With this modification in place, we turn back to the proof in \cite{heckel2020random}, where Heckel couples the clean $F$-cycles in $H$ and their shadow graphs in $G$ so that they match up whp.
As opposed to Heckel, we have to couple \emph{all types} of clean $F$-cycles in $H$, not just one type, with their (selected) shadow graphs in $G^*$.
For this purpose, as in \cite{heckel2020random}, we have to control the overlaps of pairs of clean $F$-cycles, and the overlaps of their (selected!) shadow graphs in $G^*$.
Heckel used an exhaustive case distinction and direct computation to gain control.
This is not an option for the general case, which brings us to our second main contribution.
For the discussion of two overlapping, distinct clean $F$-cycles as well as the discussion of other more complicated $F$-graphs, we observe that these are in fact avoidable configurations. Thus, it is Riordan's work on avoidable configurations in \cite{riordan2022random} that immediately gives us control over the overlaps.
On the graph side, we establish that shadow graphs $S$ of clean $F$-cycles are strictly balanced, that is, the edge density $e(S)/v(S)$ of any proper subgraph is strictly smaller than the edge density of $S$. Here, it is yet again crucial that we consider the dummy edge when we discuss the shadow graphs of sparse clean $F$-cycles.
With this property in place, we easily control the overlaps in the graph case.
As with the avoidable configurations above, the fact that these shadow graphs are strictly balanced also helps later on in the proof.

The second step in the coupling of the clean $F$-cycles in $H$ and their (selected) shadow graphs in $G^*$ is the application of the Chen-Stein method.
Since we we need to couple all types of clean $F$-cycles and their (selected) shadow graphs \emph{jointly}, the result in \cite{ross2011} used by Heckel does not suffice for our purposes. We use a stronger version from \cite{arratia1989} which gives the desired coupling \emph{directly}, as opposed to the two-step coupling in \cite{heckel2020random} that couples the number of clean $F$-cycles in the first step and the cycles given the numbers in the second.
\subsection{The Riordan-Heckel Coupling}\label{sec:ps:heckel_riordan_coupling}
We now proceed as in \cite{heckel2020random} and introduce the Riordan-Heckel coupling, given the clean $F$-cycles in $H$ and their shadow graphs in $G^*$ (which are coupled to match up).
Here, we adapt the coupling to our auxiliary model $G^*$.
Apart from this minor modification, there is no difference in the definition of the coupling, although we do consider all types of clean $F$-cycles.

In the last step, we bound the conditional $F$-edge inclusion probabilities as in \cite{heckel2020random}, which in turn is based on \cite{riordan2022random} with a modified treatment for clean $F$-cycles.
Apart from the derivation of the initial bound in \cite[Equation 3]{riordan2022random} which is valid in general, this proof step can be split into four parts.
The discussion of $F$-edges and clean $F$-cycles which can break the coupling is similar to \cite{heckel2020random}.
The bound derived for other $F$-edges is in fact related to the combinatorial argument in \cite{burghart2024}. However, we use a different argument which is consistent with our novel approach to complete the fourth and last part: the bound for clean $F$-cycles that do not break the coupling.
As mentioned before, this result is easily derived from the property that clean $F$-cycles are strictly balanced.

To summarize, the coupling in Theorem~\ref{thm:main_coupling} is achieved in the following way: 
\[
 c_F(G(n,p)) = c_F(G^*(n,p)) \overset{\text{Riordan--Heckel}}{\supseteq} H_F(n,\pi)
\]
where we write $c_F(G)$ for the set of copies of $F$ contained in $G$. Theorem~\ref{thm:main_threshold} follows from this by observing (e.g.  that $H_F(n,\pi)$, after merging edges on the same vertex set, is distributed like $H_r(n,\pi)$, as outlined in the paragraph above equation~\eqref{eq:HFHr}. 

%
%
%
%
%
\section{Preliminaries}\label{sec:formal_proof_structure}
We turn to the formal part of the proof.
In this section, we cover the core concepts, notions and terminology.
In particular, we introduce the objects which are fixed throughout the proof in Section \ref{sec:fps:global_constants}. In Section \ref{sec:fps:Fgraphs}, we discuss $F$-graphs and related notions.
Then, in Section \ref{sec:fps:induced_edges}, we cover the induced edges, avoidable configurations and clean $F$-cycles from Section \ref{sec:ps:induced}, including the results mentioned therein. Finally, we formalize the contents of Section \ref{sec:ps:sparse_cc}, covering dummy edges and $G^*(n,p)$, in Section \ref{sec:fps:dgraphs}, where we also already establish the result mentioned in Section \ref{sec:ps:cc_coupling} that shadow graphs of clean $F$-cycles including dummy edges are strictly balanced.

The coupling of the clean $F$-cycles from Section \ref{sec:ps:cc_coupling} is then discussed in Section \ref{sec:clean_cycle_coupling}.
The coupling and the results from Section \ref{sec:ps:heckel_riordan_coupling} are established in Section \ref{sec:hypergraph_coupling}, where we complete the proof of Theorem \ref{thm:main_coupling}.
The short proof of Theorem \ref{thm:main_threshold} is given in Section \ref{sec:thm_main_threshold_proof}.
\subsection{Global Constants}\label{sec:fps:global_constants}
We fix a strictly 1-balanced graph $F$ on $r>2$ vertices. Recall that $F$ is $2$-vertex-connected \cite{riordan2022random}.
Further, we fix $\delta=\delta(F)>0$ sufficiently small and $\varepsilon=\varepsilon(F,\delta)>0$ sufficiently small. These quantities are considered constant throughout.

In the remainder, we consider the asymptotics in the number $n$ of vertices. In particular, we use the Bachmann-Landau notation with respect to $n$. Let
$$\pi\le\frac{n^{\varepsilon}}{\frac{r!}{\aut(F)}\binom{n-1}{r-1}},\quad
p=\left(\frac{\pi}{1-n^{-\delta}}\right)^{1/e(F)}$$
be the respective inclusion probabilities for $\fgraph(n,\pi)$ and $G(n,p)$.
Clearly, it is sufficient to establish the whp coupling in Theorem \ref{thm:main_coupling} for $\pi$ and $p$.
\subsection{\textit{F}-graphs}\label{sec:fps:Fgraphs}
An \emph{$F$-graph} $H=(V,E)$ is given by the vertex set $V$ and a set $E$ of $F$-edges, that is, copies of $F$ with vertices in $V$.
The \emph{shadow graph} $G=(V,E')$ of $H$ is the graph on the same vertices with edge set $E'$, which is the union of the edge sets of all $F$-edges in $H$. Formally, let $h=(V_h,E_h)$ be the $F$-edges $h\in E$, then we have
\[
 E'=\bigcup_{h\in E}E_h.
\]
Let $v(H)=|V|$ denote the number of vertices and $e(H)=|E|$ the number of edges of $H$.
Let $H$ be connected if $G$ is, and let $c(H)$ be the number of components of $H$.

For a graph $G=(V,E)$, let the \emph{$F$-graph $H=(V,E')$ of $G$} be defined via the $F$-edge set $E'$ of copies of $F$ in $G$.
Finally, note that we may think of $r$-uniform hypergraphs as $K_r$-graphs and of simple graphs as $K_2$-graphs. Thus, it suffices to introduce notions for $F$-graphs (and we take the liberty to allow $F=K_2$ for definitions).
\subsection{Induced Edges, Avoidable Configurations And Clean cycles}\label{sec:fps:induced_edges}
The following quantity from \cite{riordan2022random} is crucial to our proof.
The \emph{nullity} of an $F$-graph $H$ is
$$n(H)=(r-1)e(H)+c(H)-v(H).$$
We will tacitly use the following observations on several occasions, to identify minimizers of the nullity in classes of $F$-graphs.
Consider two connected $F$-graphs $H_1$, $H_2$.
Let $v$ be a vertex in their vertex overlap such that no $F$-edge incident to $v$ is contained in both $H_1$ and $H_2$. Obtain the copy $H'_2$ of $H_2$ by substituting $v$ with an entirely new vertex. Then we have $n(H_1\cup H'_2)=n(H_1\cup H_2)$ if $v$ is the only vertex in the overlap and $n(H_1\cup H'_2)=n(H_1\cup H_2)-1$ otherwise.

Now, let $h$ be an $F$-edge in the overlap of $H_1$ and $H_2$ and $v$ a vertex in $h$ such that each of the remaining $(r-1)$ vertices in $h$ is not incident to any other $F$-edge in the overlap. Obtain the copy $H'_2$ of $H_2$ by substituting these $(r-1)$ vertices with entirely new vertices. Then we have $n(H_1\cup H'_2)=n(H_1\cup H_2)$.

Next, let an $F$-edge $h$ be \emph{induced by $H$} if it is not contained in $H$, but it is contained in the $F$-graph of the shadow graph of $H$.
An $F$-graph $H$ is an \emph{avoidable configuration} if it is connected and $n(H)\ge 2$. Intuitively, $H$ is an avoidable configuration if it contains at least two cycles. 
Further, for $k>2$, the $F$-graph $H$ is a \emph{clean $F$-cycle of length $k$} if the following holds.
There exists an ordering $h_1,\dots,h_k$ of the $F$-edges of $H$ and there are distinct vertices $v_1,\dots,v_k$ such that $h_i$ and $h_{i+1}$ overlap exactly in $v_i$ for all $i\le k$ (using $h_{k+1}=h_1$), and $h_i$, $h_j$ do not overlap otherwise.
For $k=2$, the $F$-graph $H$ is a \emph{clean $F$-cycle of length $2$} if it has exactly two $F$-edges, which overlap in exactly two vertices. 
The next result from \cite{riordan2022random} shows that we can control induced $F$-edges by controlling avoidable configurations and clean $F$-cycles.\footnote{Note that the statement in \cite{riordan2022random} is slightly different: the conclusion is only that there exists an avoidable configuration in $H$, or a clean $F$-cycle that induces $E$. However, when following through the proof in~\cite{riordan2022random} it is clear that the avoidable configuration that is found indeed induces $E$.}
\begin{lemma}\label{lem:induced}[\cite{riordan2022random}, Le.~19]
For every induced $F$-edge $E$ of an $F$-graph $H$, there exists a sub-$F$-graph $H'\subset H$ with $e(H')\le e(F)$ that induces $E$ and which is either an avoidable configuration or a clean $F$-cycle. \qed
\end{lemma}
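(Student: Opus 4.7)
The plan is to let $H'$ be a minimal (with respect to $F$-edge inclusion) sub-$F$-graph of $H$ that still induces $E$, and to show that any such $H'$ is either an avoidable configuration or a clean $F$-cycle.

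First I would bound $e(H') \le e(F)$ using minimality. Since $E$ is induced by $H'$, every graph edge of the copy $E$ lies in the shadow graph of $H'$ and is therefore covered by at least one $F$-edge of $H'$. If some $F$-edge of $H'$ failed to be the unique cover of at least one edge of $E$, we could delete it while still inducing $E$, contradicting minimality. Hence the number of $F$-edges of $H'$ is at most $e(E) = e(F)$. The same minimality argument yields that $H'$ is connected: different components of an $F$-graph have shadow graphs on vertex-disjoint supports, and $E$ is connected (since $F$ is $2$-vertex-connected, hence connected), so all edges of $E$ lie in one component of $H'$ and any other component would be superfluous.

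Next I would classify $H'$ via its nullity $n(H') = (r-1)e(H') + c(H') - v(H')$. If $n(H') \ge 2$, then by definition $H'$ is an avoidable configuration and we are done. Assume therefore $n(H') \le 1$. The degenerate case $e(H') = 1$ is excluded because then the shadow graph is a single copy of $F$ and must coincide with $E$, contradicting $E \notin H'$. So $e(H') \ge 2$, and the nullity budget becomes tight: a short computation shows that two $F$-edges sharing exactly $j \ge 1$ vertices contribute $j-1$ to the nullity, so single-vertex overlaps are ``free'' and two-vertex overlaps cost exactly $1$. Using $n(H') \le 1$ and $e(H') \ge 2$, one sees that every pair of overlapping $F$-edges of $H'$ must share at most two vertices, and that at most one pair can share two vertices. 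The case $n(H') = 0$ corresponds to a purely tree-like pattern of single-vertex overlaps; I would rule this out via the $2$-vertex-connectivity of $F$, arguing that if some vertex $v$ is a cut-vertex of the shadow graph separating $H'$ into two non-trivial pieces, then the induced copy $E$ — being $2$-vertex-connected — must lie entirely on one side of $v$, so the other side is superfluous, contradicting minimality. What remains is $n(H') = 1$, which forces exactly the cyclic structures in the definition: either a single pair of $F$-edges sharing exactly two vertices (length-$2$ clean cycle) or a cyclically arranged family of $F$-edges with single-vertex overlaps (length $\ge 3$ clean cycle).

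The main obstacle I expect is this last step: ruling out ``spurs'' in the nullity-$1$ case, i.e., an extra $F$-edge attached to a clean cycle by a single vertex. Such a spur is tree-like relative to the cycle and does not raise the nullity, yet should not be needed to induce $E$. Eliminating spurs by minimality requires tracing which edges of the spur could participate in the induced copy $E$ and invoking the $2$-vertex-connectivity of $F$ (applied to $E$) to show that $E$ cannot cross the single-vertex attachment without forcing the spur to behave like an extension of the cycle — at which point minimality would have removed it. Combining this topological argument with the nullity counting should complete the classification.
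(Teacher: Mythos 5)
The paper does not prove this lemma at all: the statement carries an immediate \qed and a citation to Riordan's Lemma~19, with a footnote explaining a slight strengthening. So there is no in-paper proof to compare against, and I'm evaluating your argument on its own terms. Your high-level plan is the right one and, as far as I can tell, is the same as Riordan's: take a minimal sub-$F$-graph $H'$ inducing $E$, deduce $e(H')\le e(F)$ and connectivity from the observation that every $F$-edge of $H'$ must uniquely cover at least one graph edge of $E$, and then classify by nullity. The $e(H')=1$ and $n(H')\ge 2$ cases are handled correctly.

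The gap is in the nullity-$\le1$ classification. The assertion that $n(H')=1$, $e(H')\ge2$ ``forces exactly the cyclic structures in the definition'' is false as stated: nullity-$1$ connected $F$-graphs need not be clean cycles, and not all exceptions are literally ``a clean cycle with one spur $F$-edge attached at a single vertex.'' You can for instance have a clean cycle with an entire tree of $F$-edges hanging off a single attachment point, or configurations in which an overlap vertex is shared by three or more $F$-edges. Also, the intermediate claim that $n(H')\le1$ implies every pairwise overlap is at most two vertices, with at most one pair overlapping in exactly two, silently uses that nullity is monotone over connected sub-$F$-graphs of a connected $F$-graph; this is true and easy (build up $F$-edge by $F$-edge, each addition contributes $j_i-1\ge0$) but should be said. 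What the argument really needs, and what your sketch gestures at without proving, is the clean intermediate statement that a \emph{minimal} $H'$ has a shadow graph with no cut vertex: since every $F$-edge of $H'$ contains an edge of $E$, and $F$ is $2$-vertex-connected, no $F$-edge can straddle a cut vertex, and then the whole of $H'$ would sit on one side, a contradiction. Once cut-vertex-freeness is in hand, one still has to prove that a connected, cut-vertex-free $F$-graph of nullity $1$ with $\ge2$ $F$-edges is a clean $F$-cycle -- for instance by exhibiting a maximal connected nullity-$0$ sub-$F$-graph $T$, showing $T$ must consist of all but one $F$-edge and (by cut-vertex-freeness) be a clean $F$-path, and checking that the last $F$-edge closes the path into a clean cycle (with the overlap vertices all distinct). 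You flag this last step as the main obstacle and sketch it honestly, but the sketch names the difficulty rather than resolving it, so the proof as written has a genuine hole here.
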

Note that we slightly deviate from \cite{heckel2020random,riordan2022random} as in our definition we do not introduce a bound on the size of avoidable configurations. Instead, we derive individual whp-bounds for the bounded number of avoidable configurations in our proof.
Thus, we present a slight variation of the corresponding result in \cite{riordan2022random}.
\begin{lemma}\label{lem:avoidable_configs}
Let $H$ be a fixed connected $F$-graph and $X$ the number of copies of $H$ in $\fgraph(n,\pi)$.
Then we have $\Eb[X]=\mc O(n^{1-n(H)+e(H)\eps})$.
\end{lemma}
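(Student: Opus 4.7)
The proof is essentially a first-moment calculation, so the plan is to count labelled embeddings of $H$ into $[n]$ and multiply by the probability that such a copy appears in $H_F(n,\pi)$.

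The first step is to bound the number of potential copies of $H$ in the complete $F$-graph on $[n]$. Each such copy is determined by an injective map of its $v(H)$ vertices into $[n]$, giving at most $n^{v(H)}$ choices; dividing by $\aut(H)$ only improves the bound, so $n^{v(H)}$ will do. For each fixed potential copy, the probability that all $e(H)$ of its $F$-edges are included in $H_F(n,\pi)$ is exactly $\pi^{e(H)}$, since distinct $F$-edges are included independently. Therefore
\[
 \Eb[X]\le n^{v(H)}\,\pi^{e(H)}.
\]

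The second step is to plug in the global upper bound $\pi\le n^{\eps}\bigl(\tfrac{r!}{\aut(F)}\binom{n-1}{r-1}\bigr)^{-1}$ from Section~\ref{sec:fps:global_constants}. Since $\binom{n-1}{r-1}=\Theta(n^{r-1})$, this yields $\pi=\bigO(n^{\eps-(r-1)})$, and hence
\[
 \pi^{e(H)}=\bigO\!\left(n^{e(H)\eps-(r-1)e(H)}\right).
\]
Combining with the embedding bound gives $\Eb[X]=\bigO(n^{v(H)-(r-1)e(H)+e(H)\eps})$.

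The final step is simply to rewrite the exponent using the nullity. Since $H$ is connected we have $c(H)=1$, so by the definition $n(H)=(r-1)e(H)+c(H)-v(H)$ we get $v(H)-(r-1)e(H)=1-n(H)$. Substituting this into the exponent yields $\Eb[X]=\bigO(n^{1-n(H)+e(H)\eps})$ as desired. There is no genuine obstacle here — the only thing to be careful about is that the constant hidden in the $\bigO$ depends on $F$ and $H$ (through $\aut(F)$ and the precise asymptotics of $\binom{n-1}{r-1}$), but not on $n$, which is consistent with the convention that $F$ is fixed.
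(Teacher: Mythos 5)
Your proposal is correct and is essentially the same one-line first-moment calculation the paper uses: bound the number of potential copies by $n^{v(H)}$, multiply by $\pi^{e(H)}=\mc O(n^{(\eps-(r-1))e(H)})$, and rewrite the exponent via $n(H)=(r-1)e(H)+1-v(H)$ using connectedness. The paper simply compresses these steps into a single displayed chain of $\mc O$-bounds.
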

\begin{proof}
The expected value of $X$ can be bounded by 
$$\Eb[X]=\mc O\left(n^{v(H)}\left(n^{\eps-(r-1)}\right)^{e(H)}\right)=\mc O\left(n^{-n(H)+1+e(H)\eps}\right).$$
\end{proof}
This shows that we can pick $\eps>0$ sufficiently small so that for any fixed avoidable configuration $A$, whp there are no copies of $A$ in $\fgraph(n,\pi)$.
Hence, this also holds for any bounded number of fixed avoidable configurations, which suffices for our proof.
Further, whp there is at most a small polynomial number of clean $F$-cycles of bounded length, since the nullity of a clean $F$-cycle is $1$. 
\subsection{Sparse clean cycles and dummy edges}\label{sec:fps:dgraphs}
Let $C$ be a clean $F$-cycle. We say that $C$ is \emph{sparse} if $e(C)=2$ and the two overlap vertices $v_1,v_2$ form an edge $e=\{v_1,v_2\}$ in both $F$-edges $h_1,h_2$ of $C$, that is, the edge $e$ is contained in the copy $h_1$ of $F$ as well as in the copy $h_2$ of $F$.
Otherwise, we say that the clean $F$-cycle $C$ is \emph{dense}. Thus, we have $e(C')=k e(F)$ for the shadow graph $C'$ of a dense clean $F$-cycle of length $k$, but $e(C')=2 e(F)-1$ in the sparse case.

Let $X$ be the number of copies of $C$ in $\fgraph(n,\pi)$.
Further, let $Y$ be the number of copies of its shadow graph $S$ in $G(n,p)$.
If $C$ is dense, we have $\Eb[X]=(1+o(1))\Eb[Y]=\Theta(n^{v(C)}p^{e(F)e(C)})$.
On the other hand, if $C$ is sparse, then we have $\Eb[X]=(1+o(1))p\Eb[Y]$ and $\Eb[Y]=\Theta(n^{v(C)}p^{2e(F)-1})$.

In order to match $\Eb[X]$ and $\Eb[Y]$, we select only a fraction $p$ of the sparse clean $F$-cycles in $G(n,p)$. To model this selection, we introduce the following `dummy edges'.

\begin{definition}\label{def:dgraph}
 A \emph{d-graph} $G=(V,U\cup D)$ is given by vertices $V$ and an edge set $E=U\cup D$, where the \emph{usual edges} $U$ are some subsets of $V$ of size $2$, and the \emph{dummy edges} $D$ are some sparse clean $F$-cycles with vertices in $V$.
 The d-graph $G=(V,U\cup D)$ is a \emph{clean d-cycle} if  
 \begin{enumerate}
  \item $(V,U)$ is the shadow graph of a dense clean $F$-cycle and $D=\emptyset$, or
  \item $(V,U)$ is the shadow graph of a sparse clean $F$-cycle $C$ and, \emph{additionally}, $D=\{C\}$, that is, the dummy edge corresponding to $C$ is present.
 \end{enumerate}
\end{definition}

This definition ensures that for any clean d-cycle $C$ of length $k$ we have $ke(F)$ edges.
Finally, the \emph{random d-graph} $G^*(n,p)$ has the vertices $[n]$, and each edge (including dummy edges) is included independently with probability~$p$. Note that we obtain $G(n,p)$ from $G^*(n,p)$ by discarding the dummy edges.

For d-graphs $G$ we also use $v(G),e(G)=|U\cup D|$, $c(G)$ as before, where $G$ is connected if $G$ without the dummy edges is. We also use the 1-density $d_1(G)$ from Equation \eqref{equ:1density_def} and the edge density $e(G)/v(G)$, which means that also strictly 1-balanced and strictly balanced d-graphs are defined.
The next result is a key argument in the proof.
\begin{lemma}\label{lem:cc_strictly_balanced}
 All clean d-cycles are strictly balanced.
\end{lemma}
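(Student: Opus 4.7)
The plan is to reduce strict balance of a clean d-cycle $C$ to the strict 1-balancedness of $F$ via an edge-count decomposition across the $F$-edges of $C$. First I would verify that every clean d-cycle of length $k$ has $v(C)=k(r-1)$ and $e(C)=ks$, so that $e(C)/v(C)=d_1(F)=s/(r-1)$: for $k\ge 3$, consecutive copies share exactly one vertex and no graph-edge; for $k=2$ dense, the two copies share two vertices but no graph-edge; for $k=2$ sparse, the shadow graph loses one usual edge to the shared edge $\{v_1,v_2\}$, but the dummy edge added by Definition~\ref{def:dgraph} restores the count.

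For a proper subgraph $C'=(V',U'\cup D')\subsetneq C$ with $e(C')\ge 1$, the aim is $(r-1)\,e(C')<s\,v(C')$. The dummy case is direct: if $D'\ne\emptyset$, then $C$ must be sparse of length $2$, and including the dummy forces $V'=V$, so $v(C')=2(r-1)$; since $C'\ne C$ at least one usual edge is missing, whence $e(C')\le 2s-1$ and $(r-1)(2s-1)<2s(r-1)=s\,v(C')$ because $r\ge 2$. From now on assume $D'=\emptyset$. For each $F$-edge $h_i$ of $C$, let $V_i'=V'\cap V(h_i)$ and $S_i$ be the subgraph of $h_i$ on vertex set $V_i'$ with edges $U'\cap U_i$, and partition $\{1,\ldots,k\}$ into $I_f=\{i:S_i=h_i\}$, $I_p=\{i:S_i\text{ is a proper non-empty subgraph of }h_i\}$, and $I_0=\{i:S_i\text{ has no edges}\}$.

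If $I_p\ne\emptyset$, strict 1-balancedness of $F$ gives $(r-1)\,e(S_i)\le s(|V_i'|-1)-1$ for each $i\in I_p$, with the equality $(r-1)\,e(S_i)=s(|V_i'|-1)$ for $i\in I_f$ and $(r-1)\,e(S_i)=0$ for $i\in I_0$. I would sum these over $I_f\cup I_p$, apply the shared-vertex identity $\sum_i|V_i'|=|V'|+s_{V'}$ (with $s_{V'}$ counting the shared vertices of $C$ lying in $V'$) and the shared-edge identity $\sum_i e(S_i)=|U'|+t$ (with $t\in\{0,1\}$ indicating the shared edge $\{v_1,v_2\}\in U'$ in the sparse $k=2$ case), and invoke the bound $s_{V'}\le|I_f|+|I_p|+|I_0\cap I_+|$ with $I_+=\{i:V_i'\ne\emptyset\}$, which holds because every shared vertex in $V'$ lies in two adjacent $V_j'$. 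Rearranging yields
\[
s\,v(C')-(r-1)\,e(C')\;\ge\;s\!\!\sum_{i\in I_0\cap I_+}\!(|V_i'|-1)\;+\;|I_p|\;+\;(r-1)\,t\;\ge\;|I_p|\;\ge\;1,
\]
which is the desired strict inequality.

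If $I_p=\emptyset$, I would argue separately. When $|I_f|=k$, one has $V'\supseteq\bigcup_i V(h_i)=V$, hence $V'=V$; for $k\ge 3$ or $k=2$ dense this forces $C'=C$, excluded by properness, whereas for $k=2$ sparse we have $e(C')\le 2s-1$ and the inequality is immediate as in the dummy case. When $|I_f|<k$, the cyclic arrangement of the $F$-edges of $C$ is crucial for $k\ge 3$: a strict sub-arc of $|I_f|$ out of $k$ consecutive $F$-edges does not close up, so $\bigcup_{i\in I_f}V(h_i)$ contains at least $|I_f|(r-1)+1$ vertices, giving $v(C')\ge |I_f|(r-1)+1>(r-1)|I_f|=(r-1)\,e(C')/s$. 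For $k=2$ the only remaining case is $|I_f|=1$, which is settled by $v(C')\ge r>r-1$. I expect the main technical obstacle to be the bookkeeping in the $I_p\ne\emptyset$ case, in particular tracking shared vertices that arise from indices $i\in I_0$ with non-empty $V_i'$; once the shared-vertex bound is in hand, the ``sub-arc has one extra vertex'' observation closes the $I_p=\emptyset$ case and the direct computation handles the dummy.
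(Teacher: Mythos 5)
Your proof is correct and takes a genuinely different route from the paper's. The paper argues abstractly: it takes a strictly balanced subgraph $S$ of maximal density, distinguishes whether $S$ covers all overlap vertices, reduces the missing-overlap case to a clean $F$-path, and then expresses $e(S)/v(S)$ (or $d_1(S)$) as a convex combination of $d_1(S_i)$ for the pieces $S_i = F_i\cap S$, invoking $d_1(S_i)\le d_1(F)$; the dummy and the sparse case are then dispatched in two short sentences. Your proof instead takes an arbitrary proper sub-d-graph directly (no reduction to a maximal-density or connected subgraph), partitions the indices into $I_f, I_p, I_0$, and tracks the double-counting of shared vertices and the shared edge via the explicit identities $\sum_i|V_i'|=|V'|+s_{V'}$ and $\sum_i e(S_i)=|U'|+t$, together with the integrality-sharpened bound $(r-1)e(S_i)\le s(|V_i'|-1)-1$ from strict 1-balancedness. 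This is more elementary and more bookkeeping-heavy, but it buys you two things over the paper's version: it handles disconnected subgraphs without an auxiliary reduction, and it treats the sparse case with the shared-edge correction $t$ explicitly rather than by appeal to "the discussion above," which in the paper silently relies on $e(S)=\sum_i e(S_i)$ — an identity that does not literally hold when the shared edge is present (though the paper's inequality still goes through, the omission is a small expository gap that your argument closes). One small point worth making explicit in your $|I_f|<k$ step: $I_f$ need not be a single arc, but since a proper subset of the edge set of a $k$-cycle decomposes into $m\ge 1$ vertex-disjoint arcs and each arc of length $\ell$ contributes $\ell(r-1)+1$ vertices, you still get $\bigl|\bigcup_{i\in I_f}V(h_i)\bigr|\ge |I_f|(r-1)+m\ge |I_f|(r-1)+1$, which is exactly what you need.
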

\begin{proof}
Let $C$ be a dense clean $F$-cycle with copies $F_1,\dots,F_k$ and overlaps $v_1,\dots,v_k$ as given by the definition, and let $G$ be its shadow graph.
Then we have
$$\frac{e(G)}{v(G)}=\frac{ke(F)}{k(v(F)-1)}=d_1(F).$$
Now, assume that $G$ is not strictly balanced and let $S\subsetneqq G$ be strictly balanced with maximal density. 
Assume that $S$ does not cover one of the overlap vertices $v_i$.
Then we can split at $v_i$ to see that $S$ is a subgraph of the shadow graph of a clean $F$-path $P$ with copies $F'_1,\dots,F'_\ell$ of $F$, that is, $F'_i$ and $F'_{i+1}$ overlap in exactly one vertex for $i<\ell$ and $F'_i$, $F'_j$ have no overlap otherwise.
Since $S$ is connected, we may assume without loss of generality that $S$ covers all overlap vertices of $P$ (possibly $\ell=1$), and that, using $S_i=F'_i\cap S$, we have $v(S_i) \ge 2$ for all $i$. It follows that
\begin{align}\label{equ:1density_cleanpathbound}
 \frac{e(S)}{v(S)} < d_1(S)=\sum_i\frac{e(S_i)}{v(S)-1}=\sum_i\frac{v(S_i)-1}{\sum_{j}(v(S_{j})-1)}d_1(S_i)\le d_1(F)=\frac{e(G)}{v(G)}.
\end{align}
But if $S$ covers all overlaps, then using the parts $S_i=F_i\cap S$ we have $$\frac{e(S)}{v(S)}=\sum_i\frac{v(S_i)-1}{\sum_{j}(v(S_{j})-1)}d_1(S_i)<d_1(F)=\frac{e(G)}{v(G)}.$$
Now, let $C$ be a sparse clean $F$-cycle and $G$ the shadow graph including the dummy edge. Further, let $S\subset G$ be a proper sub-d-graph. If $S$ includes the dummy edge, then we have $v(S)=v(G)$ and thus the result follows.
Otherwise, $S$ is a proper subgraph of $G$ without the dummy edge.
But since we still have $e(G)/v(G)=d_1(F)$, the discussion above completes the proof.
\end{proof}

\section{Clean Cycles}\label{sec:clean_cycle_coupling}
We proceed to establish the results discussed in Section \ref{sec:ps:cc_coupling}.
In Section \ref{sec:ccc_overlaps}, we show that for both clean $F$-cycles and clean d-cycles, whp there are no pairs of distinct overlapping clean cycles.
In Section \ref{sec:ccc_coupling}, we use this result to obtain the desired coupling of the clean cycles.

\subsection{Overlaps}\label{sec:ccc_overlaps}
First, we turn to the overlaps of clean $F$-cycles of length at most $e(F)$.
\begin{lemma}\label{lem:cFc_overlapsH}
 Let $C_1,C_2$ be distinct clean $F$-cycles with $e(C_1), e(C_2) \le e(F)$. Assume that $C_1,C_2$ are not vertex-disjoint and let $X$ be the number of copies of $C_1\cup C_2$ in $\fgraph(n,\pi)$. Then we have $\mathbb E[X]=\mc O(n^{-1+2e(F)\varepsilon})$.
\end{lemma}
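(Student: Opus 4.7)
The plan is to apply Lemma~\ref{lem:avoidable_configs} with $H := C_1 \cup C_2$. Since $C_1$ and $C_2$ share at least one vertex, $H$ is a connected $F$-graph; and since $e(C_i) \le e(F)$, we have $e(H) \le 2e(F)$. The lemma then delivers
\[
 \mathbb{E}[X] \;=\; \mathcal{O}\!\left(n^{\,1-n(H)+2e(F)\varepsilon}\right),
\]
so the target bound $\mathcal{O}(n^{-1+2e(F)\varepsilon})$ reduces to showing $n(H) \ge 2$, i.e.\ that $H$ is an avoidable configuration.

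Writing $s := |V(C_1) \cap V(C_2)|$ and $k := |E(C_1) \cap E(C_2)|$, a short inclusion--exclusion at the level of vertices and $F$-edges---exploiting $n(C_1)=n(C_2)=1$ as well as $c(C_1)=c(C_2)=c(H)=1$---gives the identity
\[
 n(H) \;=\; 1 + s - (r-1)\,k.
\]
Hence $n(H) \ge 2$ is equivalent to $s \ge (r-1)k+1$, which is immediate when $k=0$ because $s \ge 1$. For $k \ge 1$ the intuition is that each shared $F$-edge should drag along $r-1$ further shared vertices, and I will prove this by analyzing the arc structure of the shared $F$-edges.

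The central observation is that the pairwise vertex-overlap of two $F$-edges is intrinsic to them, independent of any ambient $F$-graph. Combined with the defining structure of clean cycles---two $F$-edges of a clean cycle of length $\ge 3$ share a vertex iff they are cyclically consecutive, in which case they share exactly one vertex, while the two $F$-edges of a length-$2$ clean cycle share exactly two vertices---this forces $E(C_1)\cap E(C_2)$ to decompose into the \emph{same} union of $m \ge 1$ maximal arcs of consecutive $F$-edges when viewed inside either $C_1$ or $C_2$. An arc of $k_i$ consecutive $F$-edges covers exactly $k_i(r-1)+1$ distinct vertices, all lying in $V(C_1) \cap V(C_2)$, so
\[
 s \;\ge\; \sum_{i=1}^m \bigl(k_i(r-1)+1\bigr) \;=\; (r-1)k + m \;\ge\; (r-1)k + 1,
\]
as required.

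The main difficulty to dispatch is ruling out the degenerate shared-$F$-edge configurations that would break this arc counting. Most importantly, a pair of shared $F$-edges overlapping in two vertices uniquely determines the length-$2$ clean cycle they span, so both $C_1$ and $C_2$ would have to equal that cycle, contradicting $C_1 \ne C_2$; similarly, a proper subset of the $F$-edges of a clean cycle cannot itself form a cycle, so $E(C_1) \subseteq E(C_2)$ combined with both being clean cycles would force $E(C_1)=E(C_2)$ and hence $C_1=C_2$. These exclusions together ensure that for distinct $C_1, C_2$ the shared $F$-edges form a proper union of $m \ge 1$ arcs within each cycle, and the counting above applies.
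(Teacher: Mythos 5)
Your proposal is correct and follows the same high-level strategy as the paper: apply Lemma~\ref{lem:avoidable_configs} to $H=C_1\cup C_2$ and reduce the task to showing $n(H)\ge 2$, i.e.\ that $H$ is an avoidable configuration. The difference lies entirely in how $n(H)\ge 2$ is justified. The paper disposes of this in one clause (``$C_1\cup C_2$ is an avoidable configuration as it contains two cycles''), implicitly relying on the tacit nullity-reduction observations stated earlier in Section~\ref{sec:fps:induced_edges}, which allow one to repeatedly split the overlap of two connected $F$-graphs without increasing the nullity, eventually reaching the one-vertex-overlap case where $n(C_1\cup C_2')=n(C_1)+n(C_2)=2$. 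You instead give a direct, self-contained count: the clean identity $n(H)=1+s-(r-1)k$, and an arc-decomposition argument showing $s\ge (r-1)k+m\ge (r-1)k+1$. The key point you correctly isolate is that the arc structure of the shared $F$-edges is intrinsic (adjacency of shared $F$-edges is determined by their vertex overlap, not by the ambient cycle), so the decomposition is the same inside $C_1$ and $C_2$, and your degenerate-case exclusions (two shared $F$-edges overlapping in two vertices, or $E(C_1)\subseteq E(C_2)$, both forcing $C_1=C_2$) are exactly what is needed to ensure the arcs are proper and pairwise vertex-disjoint. Both routes are valid; yours is longer but more explicit, and the identity $n(H)=1+s-(r-1)k$ is a nice way of making the nullity bound concrete rather than procedural.
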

\begin{proof}
 Note that $C_1\cup C_2$ is an avoidable configuration as it contains two cycles, so Lemma \ref{lem:avoidable_configs} completes the bound.
\end{proof}
This shows that if $\varepsilon$ is chosen small enough, whp there are no overlapping clean $F$-inducing $F$-cycles up to length $e(F)$.
Next, we derive the analogous result for clean d-cycles.
\begin{lemma}\label{lem:cFc_overlapsG}
 Let $G_1,G_2$ be distinct clean d-cycles that are not vertex-disjoint and both correspond to clean $F$-cycles of length at most $e(F)$. Let $X$ be the number of copies of $G_1\cup G_2$ in $G^*(n,p)$. 
 Then we have $\mathbb E[X]=o(1)$ if $\varepsilon$ is chosen small enough.
\end{lemma}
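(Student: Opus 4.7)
My plan is to mirror the strategy of Lemma~\ref{lem:cFc_overlapsH}, but on the d-graph side: instead of invoking avoidable configurations (which aren't available for d-graphs), I would use the strict balance of clean d-cycles from Lemma~\ref{lem:cc_strictly_balanced}. Let $H=G_1\cup G_2$ and $S=G_1\cap G_2$ (as d-graphs on the union/intersection of vertex sets, with both usual and dummy edges intersected/unioned). Since each $G_i$ is a clean d-cycle, the computation inside the proof of Lemma~\ref{lem:cc_strictly_balanced} gives $e(G_i)/v(G_i)=d_1(F)$. I would first argue that $S$ is a \emph{proper} sub-d-graph of both $G_1$ and $G_2$: if, say, $S=G_1$ then $G_1\subseteq G_2$, and if this inclusion were strict, Lemma~\ref{lem:cc_strictly_balanced} applied to $G_2$ would force $e(G_1)/v(G_1)<d_1(F)$, contradicting the identity above; so $G_1=G_2$, contradicting distinctness.

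With $S$ a proper sub-d-graph of each $G_i$, Lemma~\ref{lem:cc_strictly_balanced} gives $e(S)<d_1(F)\,v(S)$ whenever $v(S)\ge 2$, while the case $v(S)=1$ is immediate since then $e(S)=0<d_1(F)$. A simple inclusion-exclusion computation then yields
\[
 e(H)-d_1(F)\,v(H) \;=\; d_1(F)\,v(S)-e(S) \;>\; 0.
\]
Because $e(G_1),e(G_2)\le e(F)$, there are only finitely many isomorphism types for the pair $(G_1,G_2)$, and hence a constant $c=c(F)>0$ such that $d_1(F)\,v(S)-e(S)\ge c$ in every case under consideration.

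To finish, I would bound the expectation of $X$ by the usual first-moment estimate $\mathbb{E}[X]=\mathcal{O}(n^{v(H)}p^{e(H)})$ (the implicit constant absorbing the finitely many copies of $H$ rooted at a given labelled vertex set, and the choice of $G_1,G_2$ among finitely many types). From the definition of $p$ in Section~\ref{sec:fps:global_constants} we have $p\le n^{-1/d_1(F)+\varepsilon'}$ with $\varepsilon'=\varepsilon'(\varepsilon)\to 0$ as $\varepsilon\to 0$. Substituting,
\[
 \mathbb{E}[X] \;=\; \mathcal{O}\!\left(n^{\,v(H)-e(H)/d_1(F)+e(H)\varepsilon'}\right) \;=\; \mathcal{O}\!\left(n^{-c/d_1(F)+e(H)\varepsilon'}\right),
\]
and since $e(H)\le 2e(F)$ is bounded, this is $o(1)$ provided $\varepsilon$ is sufficiently small.

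The main obstacle is purely bookkeeping: checking that for d-graphs the intersection/union and the edge/vertex counts behave exactly as in the graph case so that the strict balance from Lemma~\ref{lem:cc_strictly_balanced} can be fed cleanly into the inclusion-exclusion identity, and handling the degenerate cases $v(S)=1$ and the (excluded) case $S\in\{G_1,G_2\}$. Once this is done, the argument reduces to the standard strictly-balanced-subgraph threshold heuristic.
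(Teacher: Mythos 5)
Your proposal is correct and takes essentially the same route as the paper: both reduce to the strict balance of clean d-cycles (Lemma~\ref{lem:cc_strictly_balanced}) via a first-moment estimate, with the paper organizing the exponent computation through the nullities $n(C_i)$ and the intersection $S$, while you use inclusion-exclusion on $H=G_1\cup G_2$ directly --- these are algebraically equivalent. You are slightly more explicit than the paper in verifying that $S$ is a \emph{proper} sub-d-graph of each $G_i$ (needed to invoke strict balance), which the paper leaves implicit under the distinctness hypothesis.
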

\begin{proof}
 Let $C_i$ be the clean $F$-cycle corresponding to $G_i$.
 Further, let $S=G_1\cap G_2$ be the intersection graph, then we have
 $\mathbb E[X]=\mc O(n^{f(S)})$ with
 \begin{align*}
  f(S)&=v(C_1)+v(C_2)-v(S)-(r-1-\varepsilon)\left(e(C_1)+e(C_2)-\frac{e(S)}{e(F)}\right)\\
  &\leq(r-1)\frac{e(S)}{e(F)}-v(S)-n(C_1)+1-n(C_2)+1+2e(F)\varepsilon\\
  &=\left(\frac{e(S)}{v(S)d_1(F)}-1\right)v(S)+2e(F)\varepsilon<0,
 \end{align*}
 using Lemma \ref{lem:cc_strictly_balanced} (in any case, be it sparse or dense).
\end{proof}
\subsection{Coupling}\label{sec:ccc_coupling}
Now, we are ready to match the clean $F$-cycles in the random $F$-graph with the clean d-cycles in the random d-graph.
For this purpose, we will need the following deep and beautiful theorem from \cite{arratia1989}. Since we do not need the theorem in its full generality, we restrict to the special case in question and adapt the notation along the way.
\begin{theorem}\label{thm:poisson_tv}
 Let $X=(X_1,\dots,X_n)$ be Bernoulli and $B_1,\dots,B_n$ be subsets of $[n]$ such that $X_i$ is independent of $(X_j)_{j\not\in B_i}$.
 Let $Y=(Y_1,\dots,Y_n)$ be independent Poisson variables with parameters $\mathbb E[Y_i]=\mathbb E[X_i]$.
 Then the total variation distance of $X$ and $Y$ is at most
 \begin{equation}\label{eq:dtv}
  4\left(\sum_i\sum_{j\in B_i}\mathbb E[X_i]\mathbb E[X_j]+\sum_i\sum_{j\in B_i\setminus\{i\}}\mathbb E[X_iX_j]\right).
 \end{equation}
\end{theorem}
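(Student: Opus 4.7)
The plan is to prove this via the generator method (Chen-Stein) for multivariate Poisson approximation, following Arratia, Goldstein, and Gordon. Let $\lambda_i = \Eb[X_i]$ and consider the Markov process on $\mathbb{Z}_{\ge 0}^n$ whose generator is
\[
(\mc A f)(x) = \sum_{i=1}^n \lambda_i\bigl(f(x+e_i)-f(x)\bigr) + \sum_{i=1}^n x_i\bigl(f(x-e_i)-f(x)\bigr);
\]
this is a superposition of coordinatewise immigration-death chains and has the product Poisson law of $Y$ as its unique stationary distribution. For each $A \subseteq \mathbb{Z}_{\ge 0}^n$, let $f_A$ solve the Stein equation $\mc A f_A = \mathbf 1_A - \Pb(Y \in A)$, constructed by integrating $\mathbf 1_A - \Pb(Y \in A)$ along the semigroup $e^{t\mc A}$. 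Then $\Pb(X \in A) - \Pb(Y \in A) = \Eb[(\mc A f_A)(X)]$, and since $d_{TV}(X,Y) = \sup_A |\Pb(X \in A) - \Pb(Y \in A)|$, it suffices to bound $|\Eb[(\mc A f_A)(X)]|$ uniformly in $A$ by the claimed quantity.

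The first step is the \emph{Stein factor} estimate. Couple two copies of the chain started at $x$ and $x+e_i$ so that they agree after the first death in coordinate $i$; taking expectations yields the uniform first-difference bound $|f_A(x+e_i)-f_A(x)|\le \min(1,\lambda_i^{-1})$. An analogous four-way coupling of chains started at $x$, $x+e_i$, $x+e_j$, $x+e_i+e_j$ gives the mixed second-difference bound $|f_A(x+e_i+e_j)-f_A(x+e_i)-f_A(x+e_j)+f_A(x)|\le \min(1,\lambda_i^{-1},\lambda_j^{-1})$. Once combined with the algebraic rearrangement below, these constants collect into the overall factor $4$.

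The second step exploits the local dependence. Using $X_i \in \{0,1\}$, a short computation rewrites
\[
\Eb[(\mc A f_A)(X)] = \sum_i \Eb\bigl[(\lambda_i - X_i)\,\Delta_i f_A(X)\bigr] + \text{second-order remainder},
\]
where $\Delta_i f_A(x) = f_A(x+e_i)-f_A(x)$. For each $i$, introduce an auxiliary vector $Z^{(i)}$ equal to $X$ on coordinates in $B_i^c$ but drawn independently of $X_i$ on $B_i$; the hypothesis that $X_i$ is independent of $(X_j)_{j\notin B_i}$ lets us swap $X$ for $Z^{(i)}$ inside $\Delta_i f_A$ so that the centering $(\lambda_i - X_i)$ decouples and contributes zero. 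The approximation error in this swap is controlled by the first-difference Stein factor times $\sum_{j \in B_i}\Eb[X_i]\Eb[X_j]$ for the $\lambda_i$ piece and by the second-difference Stein factor times $\sum_{j \in B_i\setminus\{i\}}\Eb[X_iX_j]$ for the $X_i$ piece. Summing over $i$ and taking the supremum over $A$ yields the claimed total-variation bound.

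The main obstacle is the multivariate Stein-factor estimate: the uniform first- and second-difference bounds on $f_A$ require the coordinatewise coupling argument for the immigration-death semigroup sketched above, and this is the analytic heart of the theorem. The remainder of the proof is an algebraic rearrangement that exploits precisely the conditional-independence structure encoded by the neighborhoods $B_i$.
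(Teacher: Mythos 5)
The paper does not actually prove this theorem: it is imported verbatim (up to notation and specialization) from Arratia, Goldstein and Gordon (1989), specifically their process-level bound $d_{TV}(X,Y)\le 2(2b_1+2b_2+b_3)$ with $b_3=0$ because of the independence hypothesis. So there is no in-paper proof to compare against; what must be assessed is whether your sketch would stand on its own.

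Your outline correctly identifies the modern route, namely Barbour's generator method for multivariate Poisson approximation: the coordinatewise immigration--death generator $\mc A$ has the product Poisson law as stationary distribution, the Stein equation is solved by integrating along the semigroup, and the bound is assembled from Stein-factor estimates plus a decoupling step using the neighbourhoods $B_i$. This is in the same family as the argument AGG attribute to Barbour, so at the level of strategy you are aligned with the cited source. However, as written the sketch has two genuine gaps. First, the Stein-factor claims are asserted, not proved, and they are not obviously of the claimed form: in the multivariate setting the uniform bounds on $\Delta_i f_A$ and on the mixed second difference are substantially subtler than the univariate $\min(1,\lambda^{-1})$ estimate, and the ``four-way coupling'' giving $|\Delta_i\Delta_j f_A|\le\min(1,\lambda_i^{-1},\lambda_j^{-1})$ would need to be actually constructed and analysed; moreover, if those sharper bounds really held, the final constant would come out smaller than $4$, which signals that either the bounds or the final bookkeeping is off. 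Second, and relatedly, the ``algebraic rearrangement'' that is supposed to produce the factor $4$ and match exactly the two sums $\sum_i\sum_{j\in B_i}\Eb[X_i]\Eb[X_j]$ and $\sum_i\sum_{j\in B_i\setminus\{i\}}\Eb[X_iX_j]$ is not carried out; the decoupling via the auxiliary vector $Z^{(i)}$ is waved at rather than executed, and this is precisely where the telescoping over coordinates in $B_i$ has to be done carefully to obtain both terms with the right constant. You have correctly located the analytic core of the argument, but as it stands this is an outline of a known method, not a proof, and the claimed constant $4$ is not yet justified by what you wrote.
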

We apply this result twice.
For one, we use it for the clean $F$-cycles $\mathcal C_1$ corresponding to the clean d-cycles of length at most $e(F)$ contained in $G^*(n,p)$, but also for the clean $F$-cycles $\mathcal C_2$ of length at most $e(F)$ in $\fgraph(n,\pi)$. Combining the two resulting couplings yields the following desired coupling of the clean cycles.
\begin{proposition}\label{prop:cFc_coupling}
 If $\varepsilon>0$ is chosen small enough, there exists a coupling of $\mathcal C_1$ and $\mathcal C_2$ such that~$\mathcal C_1=\mathcal C_2$ whp.
\end{proposition}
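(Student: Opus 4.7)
The plan is to apply Theorem~\ref{thm:poisson_tv} once to the indicator vector of potential clean d-cycles in $G^*(n,p)$ and once to the indicator vector of potential clean $F$-cycles in $\fgraph(n,\pi)$, and then couple the two resulting independent Poisson vectors by a standard thinning/augmentation coupling. Since Definition~\ref{def:dgraph} puts potential clean d-cycles of length at most $e(F)$ in bijection with potential clean $F$-cycles of length at most $e(F)$, both random objects can be viewed as $\{0,1\}^N$-valued vectors indexed by the same set $\mathcal{I}$ of potential clean $F$-cycles. For each $i\in\mathcal{I}$ of length $k_i$, we let $X_i^{(1)}=\mathbf 1\{G^*$ contains the clean d-cycle corresponding to $i\}$ and $X_i^{(2)}=\mathbf 1\{\fgraph(n,\pi)$ contains $i\}$. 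By the edge-count in Definition~\ref{def:dgraph} we get $\mathbb E[X_i^{(1)}]=p^{k_i e(F)}$ and $\mathbb E[X_i^{(2)}]=\pi^{k_i}=p^{k_i e(F)}(1-n^{-\delta})^{k_i}$.

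For both vectors, the dependency neighbourhood $B_i$ is given by the cycles that share at least one edge (respectively, at least one $F$-edge) with $i$. Thus the self-contribution $j=i$ to \eqref{eq:dtv} is $\sum_i \mathbb E[X_i^{(\ell)}]^2\le n^{-(r-1)+o(1)}\sum_i \mathbb E[X_i^{(\ell)}]$, and the cross-contribution $j\neq i$ is exactly the expected number of ordered pairs of distinct overlapping clean $F$-cycles (resp.\ clean d-cycles) of length at most $e(F)$, which is bounded by Lemma~\ref{lem:cFc_overlapsH} for $\mathcal C_2$ and by Lemma~\ref{lem:cFc_overlapsG} for $\mathcal C_1$. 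Both bounds are $o(1)$ provided $\varepsilon$ is small enough relative to $\delta$. Hence Theorem~\ref{thm:poisson_tv} gives couplings of $\mathcal C_1$ and $\mathcal C_2$ with independent Poisson vectors $Y^{(1)}$ and $Y^{(2)}$ of marginals $\mathrm{Po}(p^{k_ie(F)})$ and $\mathrm{Po}(\pi^{k_i})$, respectively, each with total variation distance $o(1)$.

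It remains to couple $Y^{(1)}$ with $Y^{(2)}$ so that they agree whp. Since $\pi^{k_i}\le p^{k_ie(F)}$, we use the standard thinning: write $Y_i^{(1)}=Y_i^{(2)}+Z_i$ with $Z_i\sim\mathrm{Po}\bigl(p^{k_ie(F)}-\pi^{k_i}\bigr)$, all independent. Then
\begin{align*}
 \mathbb P\bigl[Y^{(1)}\ne Y^{(2)}\bigr]
 &\le \sum_{i\in\mathcal I}\bigl(p^{k_ie(F)}-\pi^{k_i}\bigr)
 = \sum_{i\in\mathcal I} p^{k_ie(F)}\bigl(1-(1-n^{-\delta})^{k_i}\bigr)\\
 &= O\bigl(n^{-\delta}\bigr)\sum_{k=2}^{e(F)}\mathbb E\bigl[|\{i:X_i^{(1)}=1,\ k_i=k\}|\bigr]
 = O\bigl(n^{-\delta+e(F)\varepsilon}\bigr),
\end{align*}
where the last equality uses that the expected number of (potential) clean $F$-cycles of length $k$ is $\Theta(n^{k\varepsilon})$. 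Choosing $\varepsilon<\delta/e(F)$ makes this $o(1)$.

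Concatenating the three couplings yields $\mathcal C_1=Y^{(1)}=Y^{(2)}=\mathcal C_2$ outside an event of probability $o(1)$, which is exactly the conclusion. The main obstacle is controlling the overlap terms in \eqref{eq:dtv} \emph{simultaneously} for all lengths $k\le e(F)$ and for both models, and in particular on the d-graph side where the sparse dummy edges could \emph{a priori} create unusual overlap patterns; this is precisely the point at which Lemmas~\ref{lem:cFc_overlapsH} and \ref{lem:cFc_overlapsG} (which in turn rest on the strict balancedness established in Lemma~\ref{lem:cc_strictly_balanced}) do the heavy lifting.
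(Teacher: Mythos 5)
Your proof is correct and takes essentially the same route as the paper: two applications of Theorem~\ref{thm:poisson_tv} (one per model, over the same index set of potential clean cycles of length at most $e(F)$), with the overlap terms controlled by Lemmas~\ref{lem:cFc_overlapsH} and \ref{lem:cFc_overlapsG}, followed by a coupling of the two independent Poisson vectors exploiting $\pi^{k}\le p^{ke(F)}$ and the bound on the expected number of clean cycles, yielding the constraint $\varepsilon<\delta/e(F)$. The only cosmetic differences are that you take a slightly smaller (edge-sharing rather than vertex-sharing) dependency neighbourhood, and you present the final Poisson-to-Poisson step via explicit thinning $Y^{(1)}=Y^{(2)}+Z$ instead of the total-variation bound $d_{TV}(\mathrm{Po}(\lambda),\mathrm{Po}(\mu))\le|\lambda-\mu|$; these are equivalent.
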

\begin{proof}
 Denote by $\mathscr{C}$ the set of all potential copies of clean $F$-cycles of length at most $e(F)$ in $H_F(n,\pi)$. Each $C\in \mathscr{C}$ corresponds bijectively to a potential clean d-cycle in $G^*(n,p)$, and thus we will use $\mathscr{C}$ as an index set for both clean $F$-cycles and clean d-cycles by a slight abuse of notation.

 For $C\in\mathscr{C}$ let $X_C$ be the indicator for the event that $C$ is in $\fgraph(n,\pi)$.
 Furthermore, let $B_C$ be the set of all clean $F$-cycles that are not vertex disjoint from $C$.
 Note that for $C'\in B_C$ we have $\mathbb E[X_CX_{C'}]\ge\mathbb E[X_C]\mathbb E[X_{C'}]$, thus the term in \eqref{eq:dtv} is upper bounded by
 \[
  8\sum_{C\in\mathscr{C}}\ \sum_{C'\in B_C\setminus\{C\}} \mathbb{E}[X_C X_{C'}]  + 4 \sum_{C\in\mathscr{C}} \mathbb{E}[X_C]^2.
 \]
 This is $o(1)$: Indeed, collecting terms in the double-sum according to the isomorphism class of $C\cup C'$ (of which there are $\mc O(1)$ many) yields a bound of $\mc O(n^{-1+2e(F)\varepsilon})=o(1)$ by Lemma~\ref{lem:cFc_overlapsH}. Similarly, splitting up the second sum according to the isomorphism class of $C$ yields $\mc O\big(n^{2e(F)(\varepsilon - 2(r-1)}\big)=o(1)$, analagous to the proof of Lemma~\ref{lem:avoidable_configs}. Hence there is a coupling of of $(X_C)_{C\in\mathscr{C}}$ and independent Poisson variables $(Y_C)_{C\in\mathscr{C}}$ with parameters $\mathbb E[X_C]$ such that $(X_C)_C=(Y_C)_C$ whp. 
 
 Relying on Lemma~\ref{lem:cFc_overlapsG}, we proceed analogously for $G^*(n,p)$ to arrive at a coupling of the indicators $(X'_C)_C$ and independent Poisson variables $(Y'_C)_C$ with parameters $\mathbb E[X'_C]$ such that $(X'_C)_C=(Y'_C)_C$ whp.
 
 Now, note that $\mathbb E[X_C]=(1-n^{-\delta})^{e(C)}\mathbb E[X'_C]$. Therefore,
 \[
  d_{TV}\big((Y_C)_C,(Y'_C)_C\big) 
  \leq \sum_{C\in\mathscr{C}} d_{TV}(Y_C,Y'_C)
  \leq \sum_{C\in\mathscr{C}} \left((1-n^{-\delta})^{-e(C)}-1\right)\mathbb E[X_C]
  =\mc O\big(n^{-\delta + \varepsilon e(F)}\big)
 \]
 by Lemma~\ref{lem:avoidable_configs} after collecting summands with isomorphic $C$ and noting that $n(C)=1$.
 Thus for $\varepsilon < \delta/e(F)$, we can couple $(Y_C)_C$ with $(Y'_C)_C$ such that $(Y_C)_C=(Y'_C)_C$ whp.
\end{proof}

%
%
\section{The Riordan-Heckel Coupling}\label{sec:hypergraph_coupling}
In this section, we complete the proof of Theorem \ref{thm:main_coupling} by formalizing the discussion in Section \ref{sec:ps:heckel_riordan_coupling}.
In particular, we present the second part of the coupling, which is morally unchanged compared to \cite{heckel2020random}. Details can be found in Section \ref{sec:coupling_alg}.
Then, in Section \ref{sec:coupling_works}, we state the main result of this section, namely that the coupling succeeds whp. 
Sections \ref{sec:cond_prob} to \ref{sec:prop_coupling_works_proof} are dedicated to its proof.
Finally, in Section \ref{sec:thm_main_coupling_proof}, we establish Theorem \ref{thm:main_coupling} and in Section \ref{sec:thm_main_threshold_proof} we derive Theorem \ref{thm:main_threshold}.
\subsection{The Coupling Algorithm}\label{sec:coupling_alg}
We state the coupling from \cite{heckel2020random} for the general case.
So, as in \cite{heckel2020random} we fix some arbitrary order $F_1,\dots,F_M$ of the $F$-edges, where $M=\frac{r!}{\aut(F)}\binom{n}{r}$ is the total number of $F$-edges.
Now, we start the coupling of $G=G^*(n,p)$ and $H=\fgraph(n,\pi)$ by coupling $\mathcal C_1$ and $\mathcal C_2$ using Proposition \ref{prop:cFc_coupling}.
On the unlikely event that $\mathcal C_1\neq\mathcal C_2$, we say that the coupling fails.

We proceed with the coupling given $\mathcal C_1$ and $\mathcal C_2$, i.e.~specific (but any specific) choices of clean $F$-cycles for $H$, which are the clean d-cycles in $G$.
In this second part and in the given order, we iteratively decide for each $F$-edge if it is contained in $H$ and sometimes also if it is contained in $G$.

In step $j$, we consider the conditional probability $\pi_j$ that $F_j$ is included in $G$ and the conditional probability $\pi'_j$ that $F_j$ is an $F$-edge of $H$, given all available information so far -- in particular, this includes the given set $\mathcal C_1$ for both $G$ and $H$.
To be thorough, for $\pi'_j=\pi_j=0$, we decide to exclude $F_j$ from both $G$ and $H$.

Otherwise, for $\pi'_j\le\pi_j$, we flip a coin with success probability $\pi'_j/\pi_j$.
In case of success, we decide if $G$ contains 
$F_j$ or not (with probability $\pi_j$) and reflect this decision in $H$.
Otherwise, we do not make a decision for $G$ and do not include $F_j$ in $H$. 

For $\pi'_j>\pi_j$, we decide if we include $F_j$ in $H$ (with probability $\pi'_j$), but do not make a decision for $G$.
If $F_j$ is included in $H$ in this case, we say that the coupling fails.

Finally, we note that all decisions for $H$ have been made.
On the other end, we use the conditional distribution of $G=G^*(n,p)$, given all (graph-related) information so far, to generate~$G$.
Note that both $G$ and $H$ now have the correct distributions.

\subsection{Main Result}\label{sec:coupling_works}
Let $\mathcal B_1$ be the event that the maximum degree of $\fgraph(n,\pi)$ is at least $\Delta=n^{\eps}+\ln(n)n^{\eps/2}$. 
Let $\mathcal B_2$ be the event that $\fgraph(n,\pi)$ contains an avoidable configuration with at most $2e(F)^2$ $F$-edges.
Let $\mathcal B_3$ be the event that the coupling of $\mathcal C_1$ and $\mathcal C_2$ fails.
Finally, let $\mathcal B=\mathcal B_1\cup\mathcal B_2\cup\mathcal B_3$.
Note that $\Pb (\mathcal B_1)=o(1)$ by the Chernoff bound, $\Pb(\mathcal B_2)=o(1)$ by Lemma \ref{lem:avoidable_configs} (for small enough $\varepsilon>0$) and $\Pb(\mathcal B_3)=o(1)$ by Proposition \ref{prop:cFc_coupling}, thus whp $\mathcal B$ does not hold. Also, recall from the proof of Lemma \ref{lem:cFc_overlapsH} that overlapping clean $F$-cycles are avoidable configurations, so the clean $F$-cycles $\mathcal C_1=\mathcal C_2$ are pairwise vertex-disjoint unless $\mathcal B$ holds.
\begin{proposition}\label{prop:coupling_works}
If the coupling in Section \ref{sec:coupling_alg} fails, then $\mathcal B$ holds.
\end{proposition}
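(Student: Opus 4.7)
The plan is to prove the contrapositive: on $\mathcal B^c=\mathcal B_1^c\cap\mathcal B_2^c\cap\mathcal B_3^c$ the coupling does not fail. Since $\mathcal B_3^c$ already handles the first stage (so $\mathcal C_1=\mathcal C_2$), the task reduces to verifying $\pi'_j\le\pi_j$ at every step $j$ of the second stage, so that the failure branch corresponding to $\pi'_j>\pi_j$ is never entered.

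I would analyse these two conditional probabilities by exploiting the independence structures of both models: the $F$-edges of $H=H_F(n,\pi)$ are mutually independent, and the usual and dummy edges of $G=G^*(n,p)$ are mutually independent. In the generic case where none of the graph edges of $F_j$ is committed by prior decisions or by the shadow graphs of $\mathcal C_1$, one obtains $\pi_j\ge p^{e(F)}$; if additionally $F_j$ is not contained in any clean $F$-cycle of $\mathcal C_2$, then $\pi'_j\le\pi$. The relation $\pi=(1-n^{-\delta})p^{e(F)}$ then yields $\pi'_j\le\pi_j$ immediately. Since committed edges only increase $\pi_j$, the remaining work concentrates on the ``forced'' cases where $F_j$ is determined by $\mathcal C_2$ and on cases where conditioning on $\mathcal C_1=\mathcal C_2$ could interfere with independence.

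If $F_j$ lies in some clean $F$-cycle of $\mathcal C_2$, then by $\mathcal B_3^c$ the matching clean d-cycle lies in $\mathcal C_1$, and by Definition~\ref{def:dgraph} every graph edge (and every relevant dummy edge) of $F_j$ is already committed to $G$, giving $\pi_j=1=\pi'_j$. The remaining sub-cases are those in which $F_j$'s edges coincide with edges committed by prior $F$-edges or by $\mathcal C_1$ in a way that could depress $\pi_j$ below $\pi'_j$, or where the global conditioning could inflate $\pi'_j$ above $\pi$. Here Lemma~\ref{lem:induced} is the key combinatorial input: such an $F_j$ is induced by a sub-$F$-graph $H'$ of the revealed history with $e(H')\le e(F)$ which must be either an avoidable configuration --- contradicting $\mathcal B_2^c$, since the relevant configuration together with $F_j$ has at most $2e(F)^2$ $F$-edges --- or a clean $F$-cycle, which by $\mathcal B_3^c$ already belongs to $\mathcal C_2$, returning us to the forced case just treated.

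The main obstacle will be the clean-cycle subcase, and in particular the sparse clean $F$-cycles highlighted in Section~\ref{sec:ps:sparse_cc}. There the dummy-edge bookkeeping in $G^*$ is essential to matching expectations on the two sides: the strict balance of clean d-cycles given by Lemma~\ref{lem:cc_strictly_balanced}, together with the joint coupling of Proposition~\ref{prop:cFc_coupling}, is precisely what ensures that $\pi_j$ is not artificially depressed by the $\mathcal C_1$-conditioning nor $\pi'_j$ artificially inflated by the $\mathcal C_2$-conditioning. Closing this final sub-case is the second main combinatorial contribution advertised earlier in the paper.
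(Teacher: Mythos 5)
There is a genuine gap. Your plan acknowledges $\pi'_j\le\pi$ (correct, and essentially the paper's Equation~\eqref{equ:condprob_boundH}), but the lower bound on $\pi_j$ is not as immediate as you claim. You write that ``in the generic case where none of the graph edges of $F_j$ is committed\dots one obtains $\pi_j\ge p^{e(F)}$,'' and that committed edges only increase $\pi_j$. This overlooks the \emph{negative} conditioning accumulated by the coupling: at step $j$ one conditions not only on the revealed edge set $R$ being present, but also on the events that many edge sets $E'_i$ are \emph{not} fully present in $G^*$ (all $i\in N$, including the not-yet-realized clean $F$-cycles). By Harris, conditioning on these down-sets depresses $\pi_j$ below $p^{|E'_j|}$, so $\pi_j\ge p^{e(F)}$ is false without an additional argument. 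The paper's Section~\ref{sec:cond_prob} supplies exactly this: a Janson-type inequality yielding $\pi_j\ge(1-Q)p^{e(F)}$ with $Q=\sum_{i\in N_1}p^{|E_i\setminus(E_j\cup R)|}$, and the heart of the proof is then bounding $Q$.

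Your appeal to Lemma~\ref{lem:induced} covers only the ``bad'' contributions to $Q$ (the indices $i$ with $E_i\subset E_j\cup R$), i.e.\ Sections~\ref{sec:bad_edges} and~\ref{sec:bad_cFc}, where one shows $Q_{\mathrm{eb}}=Q_{\mathrm{cb}}=0$ unless $\mathcal B_2$ holds. But you give no mechanism for the ``good'' contributions $Q_{\mathrm{eg}},Q_{\mathrm{cg}}$, coming from $F_i$'s and clean $F$-cycles that share some but not all of their edges with $E_j\cup R$. These terms are not zero; they are merely small, and controlling them is a substantial counting argument (Sections~\ref{sec:good_edges} and~\ref{sec:good_cFc}) that uses $\mathcal B_1^c$ (bounded $F$-degree) to bound the number of anchored configurations, plus strict $1$-balancedness of $F$ for the edge contributions and Lemma~\ref{lem:cc_strictly_balanced} for the cycle contributions to make the exponents negative. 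You mention strict balance of clean d-cycles but not the role it plays; as written, the proposal would not close the bound $Q<n^{-\delta}$, and hence would not establish $\pi_j\ge(1-n^{-\delta})p^{e(F)}=\pi\ge\pi'_j$.

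One smaller issue: your reduction of the ``forced'' case to $\pi_j=1=\pi'_j$ is only the $\mathcal C_1$-side. The paper also handles the case $\pi'_j=0$ arising from cycles in $\mc C_1^c$, and in particular notes that for a sparse clean $F$-cycle one can have $\pi'_j=0$ while $\pi_j>0$ (the dummy edge may still be absent). Your proposal should mention why this is harmless; the paper does so by observing that the coupling then simply never tests $F_j$ in $G^*$.
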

For the proof, we proceed with the coupling until it fails for the first time. If it fails when coupling $\mathcal C_1$ and $\mathcal C_2$, then $\mathcal B_3$ holds and we are done.
Otherwise, we proceed to fix $\mathcal C_1$ and thereby also $\mathcal C_2=\mathcal C_1$.
If these cycles are not pairwise vertex disjoint, then $\mathcal B_2$ holds and we are done.
Otherwise, we enter the second part of the coupling, which will be discussed in the remainder of the 
paper.
In Section \ref{sec:cond_prob}, we entirely redo the argumentation in \cite{heckel2020random} to obtain accessible bounds for the conditional probabilities.
Then, we have to control the four contributions to the bounds.
In Section \ref{sec:bad_edges}, we discuss $F$-edges that break the coupling.
In Section \ref{sec:bad_cFc}, we discuss clean $F$-cycles that break the coupling.
In Section \ref{sec:good_edges}, we discuss the contribution of excluded $F$-edges to the bound.
In Section \ref{sec:good_cFc}, we discuss the contribution of excluded clean $F$-cycles to the bound. Finally, in Section~\ref{sec:prop_coupling_works_proof} we complete the proof of Proposition~\ref{prop:coupling_works}.

\subsection{Conditional Probabilities}\label{sec:cond_prob}
We follow \cite{riordan2022random} with the adjustments from \cite{heckel2020random}.
Assume that the coupling fails for the first time in step $j\geq 1$.
Note that prior to $j$ we have chosen to include an $F$-edge $F_i$ in $\fgraph(n,\pi)$ exactly if we chose to include it in $G^*(n,p)$. Let $Y\subset[j-1]$ be the steps where we made these decisions.
For all steps $i\in[j-1]\setminus Y$ we decided to exclude $F_i$ from $\fgraph(n,\pi)$, but only for a subset $N'\subset[j-1]\setminus Y$ of steps $i\in N'$ we decided to exclude $F_i$ from $G^*(n,p)$.
For the remaining steps $[j-1]\setminus(Y\cup N')$, we made no decision regarding the containment of $F_i$ in $G^*(n,p)$.

Further, let $H_0$
be the $F$-graph on vertices $[n]$ with $F$-edges $F_i$ for $i\in Y$ and clean $F$-cycles $\mathcal C_1$.
If $H_0\cup F_j$ has maximum degree at least $\Delta$, then $\mathcal B_1$ holds and we are done.
Further, enumerate all (potential) clean $F$-cycles except those in $\mathcal C_1$ with $\mc C_1^c=\{F_{-1},\dots,F_{-\ell}\}$, set $N''=\{-1,\dots,-\ell\}$ and let $N=N'\cup N''$. 

Before we turn to the core of the proof, we want to build some intuition. 
Note that $\pi'_j=1$ exactly if $F_j$ is in some cycle in $\mathcal C_1$, in which case we also have $\pi_j=1$ and the coupling algorithm did choose to include $F_j$ almost surely in both models. 
Further, note that $\pi'_j=0$ exactly if $F_j$ is in some cycle $C\in \mc C_1^c$ and all other $F$-edges $(F_s)_{s\in S}$ of $C$ were previously included, i.e.~$s\in Y\cap[j-1]$, or are included in some cycle $C'$ in $\mc C_1$, i.e.~$F_s\in C'$.
If $C$ is a dense clean $F$-cycle, then we also have $\pi_j=0$ and $F_j$
is excluded in $G^*(n,p)$.
In the \emph{new case} that $C$ is a sparse clean $F$-cycle, we do have $\pi_j>0$ because both $F$-edges might still be included in $G^*(n,p)$ just as long as the corresponding dummy edge is not. However, in this case our coupling flips the coin with probability $\pi'_j/\pi_j=0$, meaning that $F_j$ will be excluded in $\fgraph(n,\pi)$ and not tested in $G^*(n,p)$ almost surely.

Now we turn to the bound for $\pi'_j$.
Let $H'$ be the random $F$-graph on vertices $[n]$ where each $F$-edge of $H_0$ is included, and every
other $F$-edge is included independently with probability $\pi$.
Let $\mathcal L_2$ be the event that the cycles in $\mc C_1^c$ are not included, then we have
\begin{align}\label{equ:condprob_boundH}
\pi'_j=\mathbb P(F_j\textrm{ in }H'|\mathcal L_2)\le\mathbb P(F_j\textrm{ in }H')=\pi
\end{align}
unless $\pi'_j=1$ (cf.~above), since $\mathcal L_2$ is a down-set in the product probability space corresponding to $H'$, while the event $\{F_j\in H'\}$ is an up-set.

Next, we turn to the bound for $\pi_j$, as detailed in \cite{riordan2022random} and partially inspired by the proof of Janson's inequality and a variation suggested by Lutz Warnke \cite{riordan2015janson}.
The following arguments are based on $G^*(n,p)$, so the discussion always involves dummy edges as well unless explicitly stated otherwise.
Let $E_i$ be the edge set (including any dummy edges) of $F_i$ for all $i$ (positive and negative) and let $R$ be the edge set of the shadow graph of $H_0$.

As for $H'$, let $G'$ be the random d-graph where $R$ is included and every other edge is included independently with probability $p$.
For $i\le j$ let $E'_i=E_i\setminus R$.
What we know about $G^*(n,p)$ is exactly that $R$ is included, but none of the sets $E'_i$, $i \in N$, of edges are. 
For one, note that this includes the cycles with negative indices.
Also, note that $e\in E'_i$ may well be included, be it dummy edge or not, but not all of $E'_i$.
Now, let $A'_i$ be the event that all edges $E'_i$ are present in $G'$. Then in $G'$ we have $$\pi_j=\mathbb P\left(A'_j\middle|\bigcap_{i \in N}(A_i')^{\mathrm c}\right).$$
Now, we consider which events $A'_i$ are independent of $A'_j$, thus let
$$D_0=\bigcap_{i\in N:E'_i\cap E'_j=\emptyset}(A_i')^{\mathrm c}\quad\textrm{ and }\quad
D_1=\bigcap_{i\in N:E'_i\cap E'_j\neq\emptyset}(A_i')^{\mathrm c}.$$
Then we have 
\begin{align*}
 \pi_j&=\mathbb P(A'_j|D_0\cap D_1)=\frac{\mathbb P(A'_j\cap D_0\cap D_1)}{\mathbb P(D_0\cap D_1)}
 \ge\frac{\mathbb P(A'_j\cap D_0\cap D_1)}{\mathbb P(D_0)}\\
 &=\mathbb P(A'_j\cap D_1|D_0)=\mathbb P(A'_j|D_0)-\mathbb P(A'_j\cap D^{\mathrm c}_1|D_0).
\end{align*}
Notice that we can use independence for the former and the Harris inequality for the latter, yielding
\[
 \pi_j\ge\mathbb P(A'_j)-\mathbb P(A'_j\cap D_1^{\mathrm c}).
\]
For brevity, we denote the index set of $D_1$ by
\[
 N_1=N_{j,1}=\{i\in N:E'_i\cap E'_j\neq\emptyset\}
\]
and hence $D_1^{\mathrm c}=\bigcup_{i\in N_1}A'_i$. 
Using the distributive property and the union bound gives
\[
 \pi_j\ge\mathbb P(A'_j)-\sum_{i\in N_1}\mathbb P(A'_j\cap A'_i).
\]
Finally, we obtain the desired bound
\begin{align}\label{equ:condprob_boundG}
 \pi_j\ge p^{|E'_j|}-\sum_{i\in N_1}p^{|E'_i\cup E'_j|}\ge(1-Q)p^{e(F)},
\end{align}
where the relative error $Q$ is given by
\begin{align}\label{equ:Q_def}
 Q=Q_j=\sum_{i\in N_1}p^{|E'_i\setminus E'_j|}=\sum_{i\in N_1}p^{|E_i\setminus(E_j\cup R)|}.
\end{align}
Now, we split $Q=Q_{\mathrm{cb}}+Q_{\mathrm{cg}}+Q_{\mathrm{eb}}+Q_{\mathrm{eg}}$ for the discussion in the subsequent four sections, 
as follows:
\begin{itemize}
 \item $Q_{\mathrm{cb}}$ is the contribution coming from \emph{bad cycles}, i.e. $i<0$ with $E_i\subset E_j\cup R$
 \item $Q_{\mathrm{cg}}$ is the contribution coming from \emph{good cycles}, i.e. $i<0$ with $E_i\not\subset E_j\cup R$
 \item $Q_{\mathrm{eb}}$ is the contribution coming from \emph{bad edges}, i.e. $i>0$ with $E_i\subset E_j\cup R$
 \item $Q_{\mathrm{eg}}$ is the contribution coming from \emph{good edges}, i.e. $i>0$ with $E_i\not \subset E_j\cup R$.
\end{itemize}

\subsection{Bad Edges}\label{sec:bad_edges}
In this section, we argue that $Q_{\mathrm{eb}}=0$ unless $\mathcal B$ holds, in which case we are done.
For $i\in N_1$ contributing to $Q_{\mathrm{eb}}$, we have $E_i\subset E_j\cup R$.
With $i\in N_1$, we know that $F_i$ is not in $H_0\cup F_j$ and thus also not in $\fgraph(n,\pi)$.
On the other hand, $H_0\cup F_j$ is contained in $\fgraph(n,\pi)$ and hence $E_i\subset E_j\cup R$ implies that $F_i$ is in the $F$-graph of the shadow graph of $\fgraph(n,\pi)$, meaning that $F_i$ is induced.
By Lemma \ref{lem:induced}, there exists an avoidable configuration or a clean $F$-cycle inducing $F_i$. If the former is true, then $\mathcal B_2$ holds and we are done.
Otherwise, $F_i$ needs to be induced by a cycle in $\mathcal C_1=\mathcal C_2$. But this implies that $\pi_i=1$, which in turn implies that $i\not\in N_1$, a contradiction.

\subsection{Bad Cycles}\label{sec:bad_cFc}
In this section, we argue that $Q_{\mathrm{cb}}=0$ unless $\mathcal B$ holds, in which case we are done.
For $i\in N_1$ contributing to $Q_{\mathrm{cb}}$ (recall that this means that $i<0$ and $F_i$ is a clean cycle), we have $E_i\subset E_j\cup R$.
With $i\in N_1$, we know that $F_i$ is not in $H_0\cup F_j$ and thus also not in $\fgraph(n,\pi)$.
On the other hand, $H_0\cup F_j$ is contained in $\fgraph(n,\pi)$ and hence $E_i\subset E_j\cup R$ suggests that $F_i$ is in the $F$-graph of the shadow graph of $\fgraph(n,\pi)$, meaning that some $F$-edges of $F_i$ have to be induced.
As in Section \ref{sec:bad_edges} and using Lemma \ref{lem:induced}, if avoidable configurations are involved, then $\mathcal B_2$ holds and we are done.

Otherwise, also as in Section \ref{sec:bad_edges}, for each induced $F$-edge $F'$ in $F_i$ there exists a cycle $C(F')$ in $\mathcal C_1$ that induces $F'$.
But with $\mathcal I$ denoting the set of induced $F$-edges in $F_i$, it is immediate that 
$$(F_i\setminus\mathcal I)\cup\bigcup_{F'\in\mathcal I}C(F')$$
is both in $\fgraph(n,\pi)$ and an avoidable configuration, so $\mathcal B_2$ holds and we are done.

\subsection{Good Edges}\label{sec:good_edges}
In this section, we establish a bound for $Q_{\mathrm{eg}}$ unless $\mathcal B_1$ holds, in which case we are done. The argument is based on \cite{riordan2022random}, the approach in \cite{burghart2024} is more explicit than the argument below.
We obtain the bound by bounding the number of contributions for a given exponent. This is achieved by bounding the number of possible vertex sets of the corresponding $F_i$.

For $i\in N_1$ contributing to $Q_{\mathrm{eg}}$, we have $E_i\setminus(E_j\cup R)\neq\emptyset$.
On the other hand, by the definition of $N_1$, we have $(E_i\cap E_j)\setminus R\neq\emptyset$.
To obtain a bound on the number of vertex sets, we consider the graph intersection $S$ given by the vertex set of $F_i$ and the edges from $E_j\cup R$ that are in $F_i$, i.e. $S=(V(F_i),E_i\cap (E_j\cup R))$.
Hence, the contribution can be written as $p^{|E_i\setminus(E_j\cup R)|}=p^{e(F)-e(S)}$.
Now, we count the number of contributions given $e(S)$ and $c(S)$.
We first count the number of vertex sets that lead to such $S$; to each vertex sets there are then at most $r!/\aut(F)$-many possible $F_i$.
For the vertex set, we choose an anchor for each component, respecting that at least one component has to take one of the vertices in $E_j$, which gives $rn^{c(S)-1}$.
Then, for each component individually, given the chosen anchor vertex, the remaining vertices of the component can be picked step-by-step from the neighbours (in $E_j \cup R$) of the vertices picked so far, yielding the bound $r(r-1)\Delta$ for each such step.
Thus, the total number of vertex sets is bounded by $\mc O(n^{c(S)-1}\Delta^{r-c(S)})=\mc O(n^{c(S)-1+(r-c(S))\varepsilon})$.
Thus, the total contribution for given $e(S)$ and $c(S)$ to $Q_{\mathrm{eg}}$ is bounded by
$\mc O(n^{f(S)})$, where
\begin{align*}
 f(S)&=c(S)-1+(r-c(S))\eps-\frac{r-1-\eps}{e(F)}(e(F)-e(S))=f_1(S)+f_2(S)\eps,\\
 f_1(S)&=\frac{e(S)}{d_1(F)}-(v(S)-c(S)),\quad
 f_2(S)=v(S)-c(S)+1-\frac{e(S)}{e(F)}.
\end{align*}
Note that $f_1$ is additive over disjoint graphs and zero for isolated vertices. For each non-trivial component $S'$ of $S$ we have $d_1(S')<d_1(F)$, implying $f_1(S')=\left(\frac{d_1(S')}{d_1(F)}-1\right)(v(S')-1)<0$.  
This shows that $f(S)<0$ for any of these subgraphs $S$ of $F$ if $\varepsilon$ is small enough, yielding that the contribution from $Q_{\mathrm{eg}}$ is $o(1)$.

\subsection{Good Cycles}\label{sec:good_cFc}
In this section, we establish a bound for $Q_{\mathrm{cg}}$ unless $\mathcal B_1$ holds, in which case we are done. Then we complete the proof of Proposition \ref{prop:coupling_works}.
In principle, we follow \cite{heckel2020random}, however we need a novel approach to deal with general strictly 1-balanced graphs.
Now, additionally to the number of edges and the number of components of the graph intersection,
we also fix the clean $F$-cycle $C$ and focus on the $i\in N_1$ with $F_i$ being a copy of $C$.
As before, we bound the number of possible vertex sets, which suffices to obtain a bound on $Q_{\mathrm{cg}}$.

For $i\in N_1$ contributing to $Q_{\mathrm{cg}}$ where $F_i$ is a copy of $C$, we have $E_i\setminus(E_j\cup R)\neq\emptyset$.
On the other hand, by the definition of $N_1$, we have $(E_i\cap E_j)\setminus R\neq\emptyset$.
Let $S$ be given by the vertex set of $F_i$ and the edges from $E_j\cup R$ on these vertices, so the contribution can be written as $p^{|E_i\setminus(E_j\cup R)|}=p^{e(G)-e(S)}$, where $G$ is the clean d-cycle corresponding to $C$.
Now, we count the number of vertex sets given $e(S)$, $c(S)$ and the type $C$.
As in Section~\ref{sec:good_edges}, we choose an anchor for each component, respecting that one component has to take one of the vertices in $E_j$, which gives $\mc O(n^{c(S)-1})$. 
Then, for each component individually, given the anchor vertex we may choose the remaining vertices step-by-step from the neighbours of the vertices we have chosen so far.
Thus, the total is bounded by $\mc O(n^{c(S)-1}\Delta^{v(S)-c(S)})=\mc O(n^{c(S)-1+(v(S)-c(S))\varepsilon})$.
Thus, the total contribution for given $e(S)$ and $c(S)$ to $Q_{\mathrm{eg}}$ is bounded by
$O(n^{g(S)})$, where
\begin{align*}
g(S)&=c(S)-1+(v(S)-c(S))\eps-\frac{r-1-\eps}{e(F)}(e(G)-e(S))=g_1(S)+g_2(S)\eps,\\
g_1(S)&=\frac{e(S)}{d_1(F)}-(v(S)-c(S))-1,\quad
g_2(S)=v(S)-c(S)+\frac{e(G)-e(S)}{e(F)}.
\end{align*}
Now, we show that $g_1(S)<0$ for all $S$.
Note that $g_1(S)=f_1(S)-1$ with $f_1$ from Section \ref{sec:good_edges} 
and recall that $f_1$ is additive over disjoint graphs. 
Furthermore, as in Equation~\eqref{equ:1density_cleanpathbound}, we have $d_1(S')\le d_1(F)$ for any component $S'$ of $S$ which does not cover all overlap vertices of the clean cycle $F_i$, which implies $f_1(S')\le 0$. It is thus sufficient to show $g_1(S')<0$ for a component covering all overlap vertices (and such $S'$ is unique in $S$, if it exists). For such $S'$ we have 
\[
 g_1(S)=\left(\frac{e(S)}{v(S)d_1(F)}-1\right)v(S) < \left(\frac{d(F)}{d_1(F)}-1\right)v(S_1) <0
\]
by Lemma~\ref{lem:cc_strictly_balanced}, completing the proof that $g_1(S)<0$.

\subsection{Proof of Proposition \ref{prop:coupling_works}} \label{sec:prop_coupling_works_proof}
As indicated throughout Sections~\ref{sec:good_edges} and \ref{sec:good_cFc}, we now choose any $\delta>0$ such that
\[
 \max\left(\max_S f_1(S),\max_S g_1(S)\right)<-2\delta<0
\]
and $\varepsilon>0$ sufficiently small as required throughout the proof, e.g. by $\Pb(\mathcal{B}_2)=o(1)$, the proof of Proposition~\ref{prop:cFc_coupling}, and in particular such that $f(S),g(S)<-\delta$ over all $S$.
Then, after collecting the contributions from all four cases, we have $$\pi_j\ge(1-n^{-\delta})p^{e(F)}=\pi\ge\pi'_j$$
using Equations \eqref{equ:condprob_boundH}, \eqref{equ:condprob_boundG} and \eqref{equ:Q_def} and thus the coupling cannot fail in step $j$.
This completes the proof of Proposition \ref{prop:coupling_works}.

\subsection{Proof of Theorem \ref{thm:main_coupling}}\label{sec:thm_main_coupling_proof}
As discussed in Section \ref{sec:coupling_works}, we know that whp the event $\mc B$ does not hold, so Proposition \ref{prop:coupling_works} shows that the coupling from Section \ref{sec:coupling_alg} does not fail whp. But if it does not fail, then every $F$-edge of $\fgraph(n,\pi)$ is contained as a subgraph in $G^*(n,p)$ and thus also in $G(n,p)$.

\subsection{Proof of Theorem \ref{thm:main_threshold}}\label{sec:thm_main_threshold_proof}
We combine the coupling of $H_r(n,\pi')$ and $\fgraph(n,\pi)$ from Section \ref{sec:main_result} with the coupling of $\fgraph(n,\pi)$ and $G(n,p)$ from Theorem \ref{thm:main_coupling}. Then Theorem \ref{thm:shamirs_problem} applied to $H_r(n,\pi)$
yields that $G(n,p)$ contains an $F$-factor whp, for all $p\ge(1+\eps)p^*$ and $\eps>0$.
Combined with Theorem \ref{thm:threshold_COV}, this completes the proof.

\bibliographystyle{plain}
\bibliography{Strictly_1balanced_factors_v2}

\end{document}